\documentclass[12pt]{amsart}

\usepackage[english]{babel}
\usepackage[utf8x]{inputenc}
\usepackage{amsmath, fullpage}
\usepackage{cite}
\usepackage{tikz-cd}
\usepackage{lipsum}
\usepackage{url}

\theoremstyle{plain}
\newtheorem{thm}{Theorem}[section]

\newtheorem{prop}[thm]{Proposition}

\newtheorem{def-thm}[thm]{Definition-Theorem}
\newtheorem{def-lem}[thm]{Definition-Lemma}

\newtheoremstyle{romanstyle}          
{3pt}{3pt}                          
{\normalfont}                       
{}                                  
{\normalfont}                       
{.}                                 
{0.5em}                             
{}                                  

\theoremstyle{romanstyle}
\newtheorem{definition}[thm]{Definition}

\newtheorem{rk}[thm]{Remark}

\newtheorem*{acknowledgement}{Acknowledgement}

\newcommand{\Spec}{\operatorname{Spec}}

\newcommand{\Spf}
{\operatorname{Spf}}
\newcommand{\Res}{\operatorname{Res}}
\newcommand{\im}{\operatorname{Im}}

\newcommand{\Aut}{\operatorname{Aut}}

\newcommand{\Der}{\operatorname{Der}}
\newcommand{\Ker}{\operatorname{Ker}}
\newcommand{\End}{\operatorname{End}}
\newcommand{\Hom}{\operatorname{Hom}}

\newcommand{\Span}{\operatorname{Span}}

\newcommand{\Ind}{\operatorname{Ind}}
\newcommand{\id}{\operatorname{id}}

\newcommand{\gp}{\operatorname{gp}}

\newcommand{\0}{|0\rangle}

\newcommand{\vp}{\varphi}

\newcommand{\ul}{\underline}

\newcommand{\et}{\textrm{\'{e}t}}


\newcommand{\cD}{\mathcal{D}}

\newcommand{\cI}{\mathcal{I}}

\newcommand{\cO}{\mathcal{O}}


\newcommand{\bA}{\mathbb{A}}

\newcommand{\bC}{\mathbb{C}}

\newcommand{\bN}{\mathbb{N}}

\newcommand{\bZ}{\mathbb{Z}}

\newcommand{\ch}{\operatorname{ch}}
\newcommand{\sA}{\mathsf{A}}

\makeatletter
\newcommand{\AtEndAffiliation}{%
	\def\enddoc@text{\ifx\@empty\@addresses\else
		\bigskip\bigskip
		\par\noindent
		\@author \par 
		\@setaddresses 
		\fi}%
}
\makeatother

\title{(Log) Chiral de Rham complex and $A_n$-singular surfaces}
\author{Xi-Chuan Tan}
\address{Department of Mathematics, University of Tsukuba, Ibaraki 305-8571, Japan}
\AtEndAffiliation

\begin{document}
	
	\subjclass[2020]{17B69(Primary), 14A21(Secondary)}
	
	\begin{abstract}
		We present a construction of the chiral de Rham complex over an algebraic surface with at most rational singularities of $A_n$-type.
		An explicit formula for the character of the chiral structure sheaf is also provided.
	\end{abstract}
	
	\maketitle
	\thispagestyle{empty}
	
	\section{Introduction}
	Originating from string theory and two-dimensional conformal field theory  (\cite{friedan1986conformal}),
	the $\beta\gamma-bc$ system is a vertex algebra generated by two even fields $\beta(z)$, $\gamma(z)$ and two odd fields $b(z)$, $c(z)$,
	with the following operator product expansions (OPE):
	$\beta(z)\gamma(w)\sim1/(z-w)$,
	$b(z)c(w)\sim1/(z-w)$.
	For a regular complex variety (or complex manifold) $X$ of dimension $n$,
	the chiral de Rham complex $\Omega^{\ch}_X$ over $X$ is defined as the sheaf of vertex algebras associated to the $n$-fold tensor product of the $\beta\gamma-bc$ system (\cite{malikov1999chiral} \cite{song2016global}),
	where $\gamma$'s play the role of local coordinates on $X$.
	
	The chiral de Rham complex $\Omega^{\ch}_X$ is equipped with a chiral differential $d_0$.
	A remarkable result, proved in \cite{malikov1999chiral},
	is that $(\Omega^{\ch}_X,d_0)$ is quasi-isomorphic to the usual de Rham complex on $X$.
	Rather than the usual structure sheaf $\cO_X$,
	it will be helpful to consider the chiral structure sheaf $\cO^{\ch}_X$,
	which is associated to the $n$-fold tensor product of the $\beta\gamma$ system.
	In this setting,
	$\Omega^{\ch}_X$ is an $\cO^{\ch}_X$-module.
	Indeed, this will be more comprehensible when viewed through the superscheme $\Pi TX$,
	where $TX$
	is the tangent bundle over $X$ and $\Pi$ denotes the parity change functor.
	Over $\Pi TX$,
	locally we have $n$ even coordinates $\gamma^1,...,\gamma^n$ 
	and odd coordinates
	$c^1=d\gamma^1,...,c^n=d\gamma^n$.
	The chiral de Rham complex is locally determined by these coordinates $\gamma^i, c^i$ and their respective duals $\beta^i, b^i$ ($i=1,...,n$).
	
	Rational singularities on a surface are classified into types $A_n$, $D_n$, $E_6$, $E_7$ and $E_8$  (\cite{reid2012val}),
	determined by the intersections of exceptional curves in the blowing-up.
	Using logarithmic geometry,
	we construct a coordinate system near a singularity of type $A_n$ on a surface.
	A formally \'{e}tale map from the infinitesimal 2-dimensional disc $D^2=\Spec \bC[[x,y]]$ to a surface $X$ is defined to be a coordinate at the image of the origin of the disc.
	And continuous automorphisms of $D^2$ are regarded as the local coordinate changes on $X$.
	By identifying $\gamma$'s and $c$'s with the coordinates on the superscheme $\Pi TX$,
	we can study the transformations of the fields $\beta(z)$, $\gamma(z)$, $b(z)$ and $c(z)$ under coordinate changes.
	We will show that these data are compatible with the classical setting and together form a sheaf of vertex algebras, which we denote by $\Omega^{\ch}_X$ as well.
	Logarithmic geometry applies to $A_n$-singularities because these singular surfaces are toric, which fit naturally into the framework of log schemes.
	Specifically, there exists a monoid $Q$ with nice properties, 
	such that $X$ is locally of the form
	$\Spec\bC[Q]$
	near the singularity,
	endowed with the log structure $Q\to\bC[Q]$.
	Logarithmic geometry provides a tool to study the above coordinates on $\Pi TX$, 
	and thus helps to construct the chiral de Rham complex on singular surfaces of type $A_n$.
	
	At the end of this paper, we provide a recursive formula for the dimensions of homogeneous components of the (log) chiral structure sheaf.
	
	The base field of all analytic  and algebraic objects is the complex field $\bC$, unless specified.

	\begin{acknowledgement}
		The author thanks Scott Carnahan for many helpful comments.
		This work was supported by JST, the establishment of university fellowships towards the creation of science technology innovation, Grant Number JPMJFS2106.
	\end{acknowledgement}

	\section{Preliminaries}
	
	\subsection{Vertex algebras}\label{va}
	
	We recall the necessary notions of vertex algebras needed in this paper.
	For further topics and details, see \cite{frenkelvertex}.
	
	\begin{definition}
		For a complex vector space $V$,
		a formal power series
		$$A(z)=\sum_{j\in\bZ}A_jz^{-j}\in(\End V)[[z^{\pm1}]]$$
		is called a \textit{field} on $V$ if
		for any $v\in V$, we have $A_j\cdot v=0$ for $j\gg0$.
	\end{definition}
	
	\begin{definition}
		A \textit{vertex algebra} is a vector space $V$ over a field of characteristic $0$, equipped with the \textit{vacuum vector} $\0$, 
		the \textit{translation operator} $T\in\End V$, and the \textit{vertex operation} $Y(-,z):V\to(\End V)[[z^{\pm1}]]$ which  linearly takes each $A\in V$ to a field $Y(A,z)=\sum_{n\in\bZ}A_{(n)}z^{-n-1}$, such that $(V,\0,T,Y)$ satisfies the following axioms:
		\\
		1) Vacuum axiom: $Y(\0,z)=\id_V$. Furthermore, $A_{(n)}\0=0$ for $n\geq0$ and $A_{(-1)}\0=A$;
		\\
		2) Translation axiom: for any $A\in V$, $Y(TA,z)=\partial_z Y(A,z)$;
		\\
		3) Locality axiom: for any $A,B\in V$, there exists some $N\in\bZ_{\geq0}$ such that $$(z-w)^N[Y(A,z),Y(B,w)]=0.$$
	\end{definition}
	
	For convenience of notation, we denote a vertex algebra $(V,\0,T,Y)$ simply by $V$.
	And the field $Y(A,z)$ for $A\in V$ is sometimes denoted by $A(z)$.
	A vertex algebra $V$ is said to be $\bZ$-\textit{graded} if $V$ is a $\bZ$-graded vector space, $\0$ is a vector of degree $0$, $T$ is a linear operator of degree $1$, and for $A\in V_m$,
	we have $\deg A_{(n)}=-n-1+m$. 
	
	\begin{definition}
		A $\bZ$-graded vertex algebra $V$ is called \textit{conformal of central charge} $c\in\bC$, if there is a given non-zero \textit{conformal vector} $\omega\in V_2$ such that the Fourier coefficients $L_n$ of the corresponding vertex operator $Y(\omega,z)=\sum_{n\in\bZ}L_nz^{-n-2}$ satisfy the following conditions:
		\\
		1) $[L_n,L_m]=(n-m)L_{n+m}+\frac{n^3-n}{12}\delta_{n,-m}c\id_V$;
		\\
		2) $L_{-1}=T$, $L_0|_{V_n}=n\id_{V_n}$.
		
	\end{definition}
	
	The following notion of vertex subalgebra is from \cite{lepowsky2012introduction}-3.9.1.
	
	\begin{definition}
		A \textit{vertex subalgebra} of a vertex algebra $(V,\0,T,Y)$ is a vector subspace $U$ of $V$
		such that $(U,\0,T|_U,Y|_U)$ is itself a vertex algebra.
	\end{definition}
	
	We attach a Lie algebra to each given vertex algebra,
	which will play an important role in the construction of chiral structure sheaf in sequel.
	More details can be found in \cite{frenkelvertex}-4.1.
	
	Let $V$ be a vertex algebra.
	Set
	$$U'(V)=(V\otimes\bC[t^{\pm1}])/\im\partial$$
	where $\partial=T\otimes\id+\id\otimes\partial_t$ is a linear operator on $V\otimes\bC[t^{\pm1}]$.
	Denote by the projection of $A\otimes t^n\in V\otimes\bC[t^{\pm1}]$ in $U'(V)$ by $A_{[n]}$.
	We endow the relation $(TA)_{[n]}=-nA_{[n]}$ to $U'(V)$,
	and then there is a linear map
	$$U'(V)\to\End V,\quad A_{[n]}\mapsto A_{(n)}.$$
	Define a bilinear map
	$[\ ,\ ]:U'(V)\otimes U'(V)\to U'(V)$ by
	\begin{equation}\label{brack}
		A_{[m]}\otimes B_{[k]}\mapsto
		[A_{[m]},B_{[k]}]=\sum_{n\geq0}
		\left(
		\begin{matrix}
			m\\
			n
		\end{matrix}
		\right)
		(A_{(n)}B)_{[m+k-n]}.
	\end{equation}
	If $V$ is $\bZ$-graded, then $U'(V)$ is also $\bZ$-graded, by setting
	$\deg A_{[n]}=-n+\deg A-1$ for homogeneous $A\in V$.
	The linear map $U'(V)\to\End V$ preserves this gradation.
	
	The natural topology on $\bC[t^{\pm1}]$ is induced by taking $t^n\bC[t^{\pm1}]$ as basis of neighborhoods near $0$.
	Then the completion of $U'(V)$ with respect to the natural topology is
	$$U(V)=(V\otimes\bC((t)))/\im\partial.$$
	We have a linear map $U(V)\to\End V$,
	$$
	\sum_{n>N}f_nA_{[n]}\mapsto\Res_{z=0} Y(A,z)f(z)dz
	$$
	where $f(z)=\sum_{n>N} f_nz^n\in\bC((z))$, which
	extends $U'(V)\to\End V$.
	
	\begin{thm}[\cite{frenkelvertex}-4.1.2]
		The bilinear map (\ref{brack}) defines Lie algebra structures on $U'(V)$ and $U(V)$.
		Furthermore, the natural maps
		$U'(V)\to\End V$ and
		$U(V)\to\End V$ are Lie algebra homomorphisms.
	\end{thm}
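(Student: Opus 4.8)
The plan is to verify, in turn, that (\ref{bracket}) is well-defined on $U'(V)$, that it is skew-symmetric, and that it obeys the Jacobi identity, and then to transport all of this to the completion $U(V)$ by continuity; the compatibility of the natural maps with the commutator bracket on $\End V$ will fall out of the classical commutator formula for vertex operators.

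First I would settle well-definedness on $U'(V)$. The sum in (\ref{bracket}) is finite because $Y(A,z)$ is a field, so $A_{(n)}B=0$ for $n\gg 0$. To see the formula descends modulo $\im\partial$, it suffices to check that it kills a generator of $\im\partial$ inserted into either slot, i.e.\ that replacing $A_{[m]}$ by $(TA)_{[m]}+mA_{[m-1]}$ gives $0$: using the translation axiom in the mode form $(TA)_{(n)}=-nA_{(n-1)}$ together with the identity $(n+1)\binom{m}{n+1}=m\binom{m-1}{n}$, the two resulting sums cancel after a shift of the summation index. For the second slot one proceeds similarly, now using $A_{(n)}(TB)=T(A_{(n)}B)+nA_{(n-1)}B$ (which comes from $[T,A_{(n)}]=(TA)_{(n)}$) and the relation $(TC)_{[l]}=-lC_{[l-1]}$, or more generally $\tfrac1{j!}(T^jC)_{[l]}=(-1)^j\binom{l}{j}C_{[l-j]}$ — the bookkeeping device that recurs below.

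Skew-symmetry: I would feed the skew-symmetry of the vertex operation, $Y(A,z)B=e^{zT}Y(B,-z)A$, into (\ref{bracket}); in mode form this says $A_{(n)}B=\sum_{j\geq 0}\tfrac{(-1)^{n+j+1}}{j!}T^j(B_{(n+j)}A)$, and after converting the powers of $T$ to binomial coefficients via the identity above and resumming (the inner sum collapses by a Vandermonde-type identity), one obtains $[A_{[m]},B_{[k]}]=-[B_{[k]},A_{[m]}]$. The Jacobi identity is the substantive point and the step I expect to be most laborious: expanding $[[A_{[m]},B_{[k]}],C_{[l]}]$ by two applications of (\ref{bracket}) produces a double sum whose terms are $((A_{(i)}B)_{(j)}C)_{[m+k+l-i-j]}$, and one must show that the full cyclic combination vanishes. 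I would organize this by the value of the $[\,\cdot\,]$-index (which depends only on $i+j$) and reduce the required vanishing, for each such value, to the Borcherds identity of $V$ — equivalently to its iterate and commutator specializations, which rewrite $(A_{(i)}B)_{(j)}C$ in terms of the nested products $A_{(\bullet)}(B_{(\bullet)}C)$ and $B_{(\bullet)}(A_{(\bullet)}C)$ — after which what remains is a purely combinatorial resummation of binomial coefficients.

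Finally, the commutator formula $[A_{(m)},B_{(k)}]=\sum_{n\geq 0}\binom{m}{n}(A_{(n)}B)_{(m+k-n)}$ in $\End V$ — a standard consequence of the locality axiom — is exactly (\ref{bracket}) pushed forward along $A_{[n]}\mapsto A_{(n)}$, so, granting the Lie algebra structure on $U'(V)$, the map $U'(V)\to\End V$ is a Lie algebra homomorphism. To pass to $U(V)=(V\otimes\bC((t)))/\im\partial$, note that for $f,g\in\bC((t))$ the bracket has the closed form $[A\otimes f,\,B\otimes g]=\sum_{n\geq 0}(A_{(n)}B)\otimes\tfrac1{n!}f^{(n)}g$, a finite sum since $A_{(n)}B=0$ for $n\gg 0$; hence the bracket is continuous for the natural topology and extends uniquely to $U(V)$, and skew-symmetry, the Jacobi identity, and the homomorphism to $\End V$ via $\sum f_nA_{[n]}\mapsto\Res Y(A,z)f(z)\,dz$ all persist by density of $U'(V)$ and continuity of every map involved.
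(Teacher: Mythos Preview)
The paper does not supply its own proof of this statement; it is quoted with a citation to Frenkel--Ben-Zvi, \emph{Vertex Algebras and Algebraic Curves}, Theorem~4.1.2, and the argument is left entirely to that reference. Your sketch is correct and is essentially the standard proof found there: well-definedness modulo $\im\partial$ via the translation identity $(TA)_{(n)}=-nA_{(n-1)}$, skew-symmetry from the vertex-algebra skew-symmetry $Y(A,z)B=e^{zT}Y(B,-z)A$, the Jacobi identity reduced to the Borcherds (commutator/iterate) identity, compatibility with $\End V$ from the mode commutator formula $[A_{(m)},B_{(k)}]=\sum_{n\geq 0}\binom{m}{n}(A_{(n)}B)_{(m+k-n)}$, and extension to $U(V)$ by continuity using the closed form $[A\otimes f,B\otimes g]=\sum_{n\geq 0}(A_{(n)}B)\otimes\tfrac1{n!}f^{(n)}g$. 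There is nothing substantive to compare.
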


	\subsection{Logarithmic structure}
	In this section we recall some basic notions in logarithmic geometry.
	The theory of log (short for logarithmic) schemes was originally developed in 
	\cite{kato}, while our main reference will be \cite{ogus2018lectures}.
	Throughout this paper, all monoids are assumed to be commutative.
	
	Let $X$ be a scheme and $M_X$ a sheaf of monoids on it.
	A \textit{log structure} on $X$ is a morphism of sheaves of monoids $\alpha:M_X\to\cO_X$ such that the restriction of $\alpha$ to $\alpha^{-1}(\cO^*_X)$ is an isomorphism.
	Note that the monoid structure on the coordinate ring is given by the multiplication. 
	Throughout the paper, a log structure on a scheme $X$ means a log structure on the small \'{e}tale site $X_\et$ unless specified. 
	
	\begin{definition}
		A \textit{log scheme} is a pair $(X,M_X)$,
		consisting of a scheme $X$ together with a log structure on it.
	\end{definition}
	
	If no confusion arises,
	we simply write $X$ as a log scheme,
	and we denote by $\ul X$ the underlying scheme of $X$.
	A morphism of sheaves of monoids $\alpha:M_X\to\cO_X$ is called a \textit{prelog structure} on $X$, 
	and we say that $(X,M_X)$ is a \textit{prelog scheme}.
	For a prelog structure $\alpha:M_X\to\cO_X$ on $X$,
	we denote by its associated log structure $\alpha^{\log} :M^{\log}_X \to\cO_X$,
	where $M^{\log}_X$ is the push-out of
	$$
	\begin{tikzcd}
		\mathcal{O}_X^* & \alpha^{-1}(\mathcal{O}_X^*) \arrow[r, "\alpha|_{\alpha^{-1}\mathcal{O}_X^*}"] \arrow[l, '] & M_X
	\end{tikzcd}
	$$
	in the category of sheaves of monoids on $X_\et$, endowed with 
	$$M^{\log}_X \to\cO_X,\quad (a,b)\mapsto \alpha(a)b.$$
	In particular if $M_X$ is the zero constant sheaf as a prelog structure,
	then the corresponding log structure is said to be \textit{trivial}.
	And indeed the trivial log structure is the inclusion $\cO^*_X\hookrightarrow\cO_X$.
	
	A \textit{log ring} is a morphism of monoids $\beta:P\to A$
	where $P$ is a monoid and $A$ is a ring viewed as a multiplicative monoid.
	It is sometimes denoted by $(A,P)$ in the present paper if no confusion arises. 
	If $P\to A$ is a log ring, then $\Spec(P\to A)$ is defined to be the log scheme whose underlying scheme is $\ul X:=\Spec A$, together with the log structure $P^{\log}\to \cO_X$ induced by $P\to A$ by viewing $P$ as a constant sheaf on $X_\et$. 
	For a \textit{morphism} $f:(A,P)\to (B,Q)$ \textit{of log rings}, 
	we mean a commutative diagram
	$$
	\begin{tikzcd}
		A \arrow[r, "f^\sharp"]          & B           \\
		P \arrow[u] \arrow[r, "f^\flat"] & Q. \arrow[u]
	\end{tikzcd}
	$$
	The above morphism $f$ is called an \textit{isomorphism} if $f^\sharp$ is an isomorphism of rings and $f^\flat$ is strict (i.e. the morphism 
	$P/P^*\to Q/Q^*$ induced by $f^\flat$ is an isomorphism).
	
	Let $f:X\to Y$ be a morphism of prelog schemes and $E$ an $\cO_X$-module. An $E$-\textit{valued derivation} of $X/Y$ is a pair $(D,\delta)$ where $D:\cO_X\to E$ is a morphism of abelian sheaves and $\delta:M_X\to E$ is a morphism of sheaves of monoids such that the following conditions are satisfied:
	\\
	1) $D(\alpha_X(m))=\alpha_X(m)\delta(m)$ for all local sections $m$ of $M_X$;
	\\
	2) $\delta(f^\flat(n))=0$ for all local sections $n$ of $f^{-1}M_Y$;
	\\
	3) $D(ab)=aD(b)+bD(a)$ for all local sections $a,b$ of $\cO_X$;
	\\
	4) $D(f^\sharp(c))=0$ for all local sections $c$ of $f^{-1}\cO_Y$.
	\\ \hspace*{\fill} \\
	Denote by $\Der_{X/Y}(E)$ the set of all such derivations.
	An analog of differentials of usual schemes is given as the following result.
	
	\begin{thm}[\cite{ogus2018lectures}-IV-1.2.4]
		Let $f:X\to Y$ be a morphism of prelog schemes. Then the functor $E\mapsto\Der_{X/Y}(E)$ is representable by an $\cO_X$-module $\Omega_{X/Y}^1$ endowed with a universal derivation $d\in\Der_{X/Y}(\Omega_{X/Y}^1)$.
	\end{thm}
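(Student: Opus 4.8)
The plan is to construct $\Omega^1_{X/Y}$ by hand as an explicit quotient $\cO_X$-module carrying a tautological derivation, and then to verify the universal property directly; since the construction makes sense on the small étale site $X_\et$ without reference to charts, no separate gluing step is needed.

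First I would set
$$\Omega^1_{X/Y} \;=\; \bigl(\Omega^1_{\ul X/\ul Y}\oplus(\cO_X\otimes_{\bZ}M_X^{\gp})\bigr)\big/\cK,$$
where $\Omega^1_{\ul X/\ul Y}$ is the ordinary sheaf of Kähler differentials of the underlying morphism (its formation on $X_\et$ agreeing with the Zariski one), $M_X^{\gp}$ is the group completion of $M_X$, and $\cK$ is the $\cO_X$-submodule generated, locally on $X_\et$, by the sections $(d\alpha_X(m),0)-(0,\alpha_X(m)\otimes m)$ for $m$ a local section of $M_X$ and $(0,1\otimes f^\flat(n))$ for $n$ a local section of $M_Y$. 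Writing $d\colon\cO_X\to\Omega^1_{X/Y}$, $a\mapsto\overline{(da,0)}$ and $\delta\colon M_X\to\Omega^1_{X/Y}$, $m\mapsto\overline{(0,1\otimes m)}$ (the ``$d\log$'' map), I would then check that $(d,\delta)\in\Der_{X/Y}(\Omega^1_{X/Y})$: conditions 1) and 2) are exactly the two families of relations imposed in $\cK$, while 3) (Leibniz) and 4) (vanishing on $f^\sharp f^{-1}\cO_Y$) are inherited from the universal derivation of $\Omega^1_{\ul X/\ul Y}$.

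The substance is the universal property. Given an $\cO_X$-module $E$ and a derivation $(D,\delta')\in\Der_{X/Y}(E)$, the map $D$ factors uniquely as $\varphi_1\circ d_{\ul X/\ul Y}$ through an $\cO_X$-linear $\varphi_1\colon\Omega^1_{\ul X/\ul Y}\to E$ by the standard universal property of ordinary differentials (using 3) and 4)), and the monoid homomorphism $\delta'$ factors uniquely through $M_X^{\gp}$ because $E$ is a sheaf of abelian groups, giving an $\cO_X$-linear $\varphi_2\colon\cO_X\otimes_{\bZ}M_X^{\gp}\to E$. Conditions 1) and 2) for $(D,\delta')$ say precisely that $\varphi_1\oplus\varphi_2$ annihilates the generators of $\cK$, so it descends to a unique $\varphi\colon\Omega^1_{X/Y}\to E$ with $\varphi\circ d=D$ and $\varphi\circ\delta=\delta'$; uniqueness of $\varphi$ follows because $d(\cO_X)$ and $\delta(M_X)$ generate $\Omega^1_{X/Y}$ over $\cO_X$. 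This produces a functorial identification $\Hom_{\cO_X}(\Omega^1_{X/Y},E)\cong\Der_{X/Y}(E)$, which is the asserted representability.

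The main obstacle I anticipate is sheaf-theoretic bookkeeping rather than anything conceptual: one must make sure the naive quotient above genuinely corepresents the functor, i.e. that ``generated by $d(\cO_X)$ and $\delta(M_X)$'' and ``$\varphi_1\oplus\varphi_2$ kills $\cK$'' are legitimately checkable on sections over each étale open, and that group completion and tensor products of sheaves of monoids and modules behave as expected on $X_\et$. An alternative, equally workable route — which I would at most sketch — is to choose charts $Q\to M_X$ and $Q'\to M_Y$ étale-locally, reduce to the presentation $\Omega^1_{(A,P)/(B,Q')}=(\Omega^1_{A/B}\oplus(A\otimes_{\bZ}P^{\gp}))/(\text{the analogous relations})$ over log rings, and patch the resulting local objects using uniqueness of representing objects.
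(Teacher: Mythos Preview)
Your argument is correct and is the standard verification of Kato's construction. Note, however, that the paper does not actually prove this theorem: it is stated as a citation from \cite{kato}, and the paper merely records (without proof) two explicit presentations of $\Omega^1_{X/Y}$, the first of which is precisely the quotient $(\Omega^1_{\ul X/\ul Y}\oplus(\cO_X\otimes_{\bZ}M_X^{\gp}))/\cK$ you build. So your proposal supplies the details the paper omits, using exactly the model the paper quotes; there is no alternative in-paper proof to compare against.
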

	
	For a morphism of log schemes $X\to Y$,  we denote by $\Omega_{\underline{X}/\underline{Y}}^1$ the sheaf of relative differentials of the underlying schemes. 
	And for a monoid $Q$, let $Q^{\gp}$ be the group associated to $Q$,
	which is identified with the cokernel of the diagonal embedding $Q\to Q\oplus Q$.
	Two constructions of $\Omega_{X/Y}^1$ are provided in \cite{kato}-1.7:
	\\ 
	1) $\Omega_{X/Y}^1\simeq(\Omega_{\underline{X}/\underline{Y}}^1\oplus(\cO_X\otimes M_X^{gp}))/R$ where $R$ is the $\cO_X$-submodule generated by sections of the form
	$$(d\alpha_X(m),-\alpha_X(m)\otimes m),\quad \forall m\in M_X,$$
	$$(0,1\otimes f^\flat(n)),\quad \forall n\in f^{-1}M_Y.$$
	2) $\Omega_{X/Y}^1\simeq(\cO_X\otimes M_X^{gp})/(R_1+R_2)$ by $dm\mapsto 1\otimes m$ for $m\in M_X$. Here 
	\\
	$\bullet$ $R_1\subset \cO_X\otimes M_X^{gp}$ is the subsheaf of sections locally of the form
	$$\sum_i\alpha_X(m_i)\otimes m_i-\sum_i\alpha_X(m_i')\otimes m_i',$$
	for $\sum_i\alpha_X(m_i)=\sum_i\alpha_X(m_i')$. 
	\\
	$\bullet$ $R_2$ is the image of $\cO_X\otimes f^{-1}M_Y^{gp}\to\cO_X\otimes M_X^{gp}$.\\
	
	We explain above discussion briefly in the language of log rings,
	which will be used in later construction,
	referring to \cite{ogus2018lectures}-IV-1.1.2.
	Let $\theta:(\alpha:P\to A)\to (\beta:Q\to B)$ be a morphism of log rings.
	Then $\theta$ gives rise to a morphism of affine log schemes
	$X\to Y$
	where $X=\Spec(Q\to B)$, $Y=\Spec(P\to A)$.
	The module of global sections of the $\cO_X$-module $\Omega_{X/Y}^1$ is isomorphic to
	$$(\Omega_{B/A}^1\oplus(B\otimes Q^{\gp}/P^{\gp}))/R.$$
	Here $R$ is the submodule of $(\Omega_{B/A}^1\oplus(B\otimes Q^{\gp}/P^{\gp}))$
	generated by elements of the form
	$$(d\beta(q),-\beta(q)\otimes\bar{\pi}(q))\quad\text{for}\ q\in Q$$
	where
	$\bar{\pi}:Q\to Q^{\gp}/P^{\gp}$
	is the canonical map induced by $\pi:Q\to Q^{\gp}$.
	Or alternatively, it is isomorphic to 
	$$(B\otimes Q^{\gp}/P^{\gp})/R'$$
	where $R'$ is the submodule generated by elements of the form
	$$\sum_i \beta(q_i)\otimes q_i-\sum_i \beta(q_i')\otimes q_i'\quad\text{for} \ \sum_i \beta(q_i)=\sum_i \beta(q_i').$$

	A morphism $f:X\to Y$ of log schemes is \textit{strict} if the induced morphism of sheaves of monoids $(f^*M_Y)^{\log}\to M_X$ is an isomorphism.
	A \textit{log thickening} is a strict closed immersion $i:T'\to T$ of log schemes such that the square of the ideal sheaf $\cI$ of $T'$ in $T$ vanishes, and the subgroup $1+\cI$ of $\cO_{T}^*\simeq M_{T}^*$ operates freely on $M_T$. 
	A \textit{log thickening over} $f$ is a commutative diagram
	$$
	\begin{tikzcd}\label{thi}
		T' \arrow[d, "g"'] \arrow[r, "i"] & T \arrow[d, "h"] \\
		X \arrow[r, "f"]                  & Y               
	\end{tikzcd}
	$$
	where $i$ is a log thickening. 
	A \textit{deformation} of $g$ to $T$ is an element of 
	$$\operatorname{Def}_f(g,T):=\{\tilde{g}:T\to X:\tilde{g}\circ i=g,f\circ\tilde{g}=h\}.$$
	The morphism $f:X\to Y$ is \textit{formally smooth} (resp. \textit{unramified}, resp. \textit{\'{e}tale}) if for every log thickening $T'\to T$ over $f$, locally on $T$ there exists at least one (resp. at most one, resp. exactly one) deformation $\tilde{g}$ of $g$ to $T$.
	A morphism $f$ is \textit{smooth} (resp. \textit{\'{e}tale}) if it is formally
	smooth (resp. \'{e}tale) and satisfies the following conditions:
	\\
	1) $M_X$ and $M_Y$ are coherent;
	\\
	2) $\ul f$ is locally of finite presentation.
	\\ \hspace*{\fill} \\
	We list two useful criteria for smoothness and \'{e}taleness from \cite{ogus2018lectures}-IV-3.1.9\&3.1.10.
	
	\begin{thm}\label{smooth}
		Let $Q$ be a finitely generated monoid, let $\sA_Q:=\Spec(Q\to R[Q])$ and let $S:=\Spec (0\to R)$.
		Then the following conditions are equivalent:
		\\
		$\bullet$ The order of the torsion subgroup of $Q^{\gp}$ is invertible in $R$.
		\\
		$\bullet$  The morphism of log schemes $\sA_Q\to S$ is smooth.
		\\
		$\bullet$  The group scheme $\sA^*_Q:=\Spec R[Q^{\gp}]$ is smooth over $S$.
	\end{thm}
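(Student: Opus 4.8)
The plan is to prove the cycle of implications (i) $\Rightarrow$ (ii) $\Rightarrow$ (iii) $\Rightarrow$ (i), where (i), (ii), (iii) denote the three bulleted conditions in the order stated. For (i) $\Rightarrow$ (ii) I would use the infinitesimal criterion for log smoothness. The two auxiliary finiteness conditions in the definition of ``smooth'' hold automatically here: $Q$ finitely generated means $R[Q]$ is a finitely presented $R$-algebra, so $\ul{\sA_Q}\to\Spec R$ is locally of finite presentation, while $M_{\sA_Q}$ is coherent (it has the global chart $Q$) and $M_S$ is trivial. Hence it suffices to verify the lifting property against an arbitrary log thickening $i\colon T'\hookrightarrow T$ over $\sA_Q\to S$ with square-zero ideal $\cI$. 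By the universal property of $\sA_Q=\Spec(Q\to R[Q])$, a morphism of log schemes $T\to\sA_Q$ over $S$ is the same datum as a monoid homomorphism $Q\to\Gamma(T,M_T)$; so a deformation of the given $g\colon T'\to\sA_Q$ to $T$ is exactly a lift of $Q\to\Gamma(\ul T,M_{T'})$ along $\Gamma(\ul T,M_T)\to\Gamma(\ul T,M_{T'})$, and we must produce such a lift \'etale-locally on $\ul T$, i.e.\ lift the associated map of sheaves of monoids $\ul Q\to M_{T'}$ to $\ul Q\to M_T$ locally.

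The crux is to reduce this to group completions. Because $1+\cI$ operates freely on $M_T$ with quotient $M_{T'}$ and $\cI^2=0$, the truncated logarithm $1+x\mapsto x$ identifies the abelian sheaf $1+\cI$ with $\cI$ and produces a short exact sequence $0\to\cI\to M_T^{\gp}\to M_{T'}^{\gp}\to0$ in which $\cI$ is an $\cO_{T'}$-module, in particular an $R$-module; moreover the squares $M_T\hookrightarrow M_T^{\gp}$ and $M_{T'}\hookrightarrow M_{T'}^{\gp}$ are cartesian (since $1+\cI\subseteq M_T$), so lifting $\ul Q\to M_{T'}$ to $M_T$ is equivalent to lifting the group homomorphism $\ul{Q^{\gp}}\to M_{T'}^{\gp}$ to $M_T^{\gp}$. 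The obstruction to the latter is the class of the extension of $\ul{Q^{\gp}}$ by $\cI$ obtained by pullback; using a two-term free resolution of the constant sheaf $\ul{Q^{\gp}}$ and the splitting $Q^{\gp}\cong\bZ^{\oplus r}\oplus\bigoplus_j\bZ/n_j\bZ$, one identifies the relevant local obstruction sheaf with $\bigoplus_j\cI/n_j\cI$. If the order of the torsion subgroup of $Q^{\gp}$ --- equivalently, each $n_j$ --- is invertible in $R$, then $n_j$ acts invertibly on the $R$-module $\cI$, this sheaf vanishes, the lift exists \'etale-locally, and $\sA_Q\to S$ is smooth. I expect this to be the main obstacle: carefully setting up the deformation theory, namely the passage from monoid homomorphisms out of $Q$ to group homomorphisms out of $Q^{\gp}$, the exactness of $0\to\cI\to M_T^{\gp}\to M_{T'}^{\gp}\to0$ together with the cartesianness of the two squares, and the precise identification of the obstruction with $\bigoplus_j\cI/n_j\cI$.

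For (ii) $\Rightarrow$ (iii): if $q_1,\dots,q_k$ generate $Q$ as a monoid, then $R[Q^{\gp}]$ is the principal localization $R[Q][(e^{q_1}\cdots e^{q_k})^{-1}]$, so $\sA_Q^*=\Spec R[Q^{\gp}]$ is an open subscheme of $\ul{\sA_Q}$, and on it the log structure $M_{\sA_Q}$ is generated by units, hence trivial. Since smoothness of log schemes is local on the source and coincides with ordinary smoothness when both log structures are trivial, $\sA_Q\to S$ smooth forces $\sA_Q^*\to S$ to be smooth in the usual sense. For (iii) $\Rightarrow$ (i): a smooth morphism has regular fibres, so for every residue field $\kappa$ of $R$ the ring $\kappa[Q^{\gp}]\cong\kappa[t_1^{\pm1},\dots,t_r^{\pm1}]\otimes_\kappa\bigotimes_j\kappa[s_j]/(s_j^{n_j}-1)$ is regular, hence reduced; passing to the subring $\kappa[s_j]/(s_j^{n_j}-1)$ and noting that $\kappa[s]/(s^n-1)=\kappa[s]/(s^m-1)^{p^a}$ is non-reduced whenever $\Char\kappa=p$ divides $n=p^am$, we conclude that each $n_j$ is invertible in every residue field of $R$, hence lies in no maximal ideal of $R$, hence is a unit in $R$; thus the order of the torsion subgroup of $Q^{\gp}$ is invertible in $R$. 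This closes the cycle.
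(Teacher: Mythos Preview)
The paper does not supply its own proof of this theorem: it is quoted from the literature, introduced by the sentence ``We list two useful criteria for smoothness and \'{e}taleness from \cite{ogus2018lectures} chapter IV.''  There is therefore no in-paper argument to compare your proposal against.

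For what it is worth, your cycle (i) $\Rightarrow$ (ii) $\Rightarrow$ (iii) $\Rightarrow$ (i) is the standard route, essentially the one in Ogus (Chapter IV, Theorem 3.1.8) and in Kato's original paper.  The main step (i) $\Rightarrow$ (ii) is handled exactly as in those references: reduce the lifting problem for $Q\to M_{T'}$ to the corresponding problem for $Q^{\gp}\to M_{T'}^{\gp}$ via the cartesian square coming from $1+\cI\subseteq M_T$, and then observe that the obstruction to lifting a group homomorphism along the extension $0\to\cI\to M_T^{\gp}\to M_{T'}^{\gp}\to 0$ lies in $\operatorname{Ext}^1(Q^{\gp},\cI)\cong\bigoplus_j\cI/n_j\cI$, which vanishes under hypothesis (i).  Your treatments of (ii) $\Rightarrow$ (iii) (restriction to the open torus where the log structure trivializes) and (iii) $\Rightarrow$ (i) (reducedness of $\kappa[s]/(s^n-1)$ forces $n$ invertible in every residue field, hence in $R$) are correct and again match the literature.
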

	
	\begin{thm}\label{etale}
		Let $\theta:P\to Q$ be a morphism of finitely generated monoids,
		and let $f:\sA_Q\to \sA_P$ be the corresponding morphism of log schemes over a base ring $R$.
		Then the following conditions are equivalent:
		\\
		$\bullet$ The kernel and cokernel of $\theta^{gp}$ are finite groups whose order is invertible in $R$.
		\\
		$\bullet$ The morphism of log schemes $f:\sA_Q\to \sA_P$ is \'{e}tale over $R$.
		\\
		$\bullet$ The morphism of group schemes $f|_{\sA^*_Q}:\sA^*_Q\to \sA^*_P$ is \'{e}tale over $R$.
	\end{thm}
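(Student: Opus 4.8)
The plan is to prove the equivalence of the first two conditions directly, and then to obtain the third as the special case of trivial log structures applied to $\theta^{\gp}$. Since $P$ and $Q$ are finitely generated, the log structures $M_{\sA_P}$ and $M_{\sA_Q}$ are coherent, and $\ul f\colon\Spec R[Q]\to\Spec R[P]$ is locally of finite presentation, because $R[Q]$ is generated as an $R[P]$-algebra by the monomials attached to a finite generating set of $Q$. Hence $f$ is \'etale if and only if it is formally \'etale, that is, formally smooth and formally unramified, so it suffices to characterize these two properties in terms of $\theta^{\gp}$.

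For the unramified part I would compute the sheaf of log differentials. From the standard right-exact sequence $f^{*}\Omega^{1}_{\sA_P/S}\to\Omega^{1}_{\sA_Q/S}\to\Omega^{1}_{\sA_Q/\sA_P}\to 0$ (cf.\ \cite{kato}) together with the identification $\Omega^{1}_{\sA_Q/S}\cong\cO_{\sA_Q}\otimes_{\bZ}Q^{\gp}$, $d\log e^{q}\mapsto 1\otimes q$, furnished by the second construction of $\Omega^{1}$ recalled above (and likewise for $P$, the transition map being $\id\otimes\theta^{\gp}$), one gets $\Omega^{1}_{\sA_Q/\sA_P}\cong\cO_{\sA_Q}\otimes_{\bZ}\operatorname{coker}(\theta^{\gp})$. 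Since along any square-zero log thickening $T'\hookrightarrow T$ over $f$ with ideal $\cI$ two deformations of $g$ differ by a section of $\Hom_{\cO_{T}}(g^{*}\Omega^{1}_{\sA_Q/\sA_P},\cI)$ (this being the translation of the universal property of $\Omega^{1}$ as representing $\Der$), $f$ is formally unramified if and only if $\Omega^{1}_{\sA_Q/\sA_P}=0$. Finally $R[Q]\otimes_{\bZ}\operatorname{coker}(\theta^{\gp})=0$ exactly when $\operatorname{coker}(\theta^{\gp})$ is finite and each of its invariant factors is a unit in $R$ --- using that an integer $n$ annihilates the nonzero ring $R[Q]$ only if $n\in R^{*}$, as one sees through the augmentation $R[Q]\to R$ --- i.e.\ exactly when $\operatorname{coker}(\theta^{\gp})$ is a finite group of order invertible in $R$. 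So the cokernel condition is precisely formal unramifiedness.

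For formal smoothness I would argue from the lifting criterion. A deformation of $g\colon T'\to\sA_Q$ along the thickening is the same as a lift of the monoid homomorphism $Q\to\Gamma(T',M_{T'})$ attached to $g$ through the surjection $\Gamma(T,M_T)\twoheadrightarrow\Gamma(T',M_{T'})$, whose kernel is $1+\Gamma(\cI)\cong\Gamma(\cI)$ as $\cI^{2}=0$, compatibly with the lift on $P$ prescribed by $h\colon T\to\sA_P$; passing to groups (and working locally on $T$, as the criterion allows) this becomes an extension problem of abelian groups with kernel the $R$-module $\Gamma(\cI)$. If $\ker(\theta^{\gp})$ and $\operatorname{coker}(\theta^{\gp})$ are finite of order invertible in $R$, the prescribed lift on $P^{\gp}$ must kill $\ker(\theta^{\gp})$ --- a finite group of invertible order admits no nonzero homomorphism to an $R$-module in which that order is a unit --- hence descends to $\im(\theta^{\gp})$; and since $\operatorname{Ext}^{1}_{\bZ}(\operatorname{coker}(\theta^{\gp}),\Gamma(\cI))=0$ and $\Hom_{\bZ}(Q^{\gp},\Gamma(\cI))\to\Hom_{\bZ}(\im(\theta^{\gp}),\Gamma(\cI))$ is surjective, one may first extend the lift over $Q^{\gp}$ and then adjust it to match the prescribed one, producing the deformation; thus $f$ is formally smooth. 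The converse --- that formal smoothness already forces $\ker(\theta^{\gp})$ to be finite of order invertible in $R$ --- is the step I expect to be the main obstacle. The cleanest route is via the log cotangent complex $L_{\sA_Q/\sA_P}\simeq\cO_{\sA_Q}\otimes^{\mathbf{L}}_{\bZ}\bigl(P^{\gp}\xrightarrow{\theta^{\gp}}Q^{\gp}\bigr)$, whose $H_{0}$ is $\cO_{\sA_Q}\otimes\operatorname{coker}(\theta^{\gp})$ and, once the cokernel condition holds, whose $H_{1}$ is $\cO_{\sA_Q}\otimes\ker(\theta^{\gp})$, so that formal \'etaleness --- acyclicity of $L_{\sA_Q/\sA_P}$ --- is the simultaneous vanishing of both; elementarily, the same is seen by exhibiting a non-split square-zero log thickening over $f$ along which the obstruction to the existence of a deformation is a nonzero section of $\cO_{\sA_Q}\otimes_{\bZ}\ker(\theta^{\gp})$ whenever the condition fails. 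Together with the previous paragraph this gives the equivalence of the first two conditions.

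Finally, the third condition follows formally. The morphism $f|_{\sA^{*}_Q}\colon\Spec R[Q^{\gp}]\to\Spec R[P^{\gp}]$ is exactly $\sA_{\theta^{\gp}}$ for the group homomorphism $\theta^{\gp}\colon P^{\gp}\to Q^{\gp}$, viewed between log schemes with trivial log structures, for which log \'etaleness coincides with ordinary \'etaleness. Applying the now-established equivalence of the first two conditions to $\theta^{\gp}$ in place of $\theta$ --- and noting $(\theta^{\gp})^{\gp}=\theta^{\gp}$, so that its kernel and cokernel are $\ker(\theta^{\gp})$ and $\operatorname{coker}(\theta^{\gp})$ --- shows that $f|_{\sA^{*}_Q}$ is \'etale if and only if the very same numerical condition holds, which closes the cycle.
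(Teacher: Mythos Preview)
The paper does not prove this statement: it is quoted, alongside Theorem~\ref{smooth}, from \cite{ogus2018lectures}, Chapter~IV (see the sentence introducing those two theorems), and used as a black box later when checking that the map $\sA_{\bN^2}\to\sA_N$ is \'etale. There is therefore no in-paper argument to compare your plan against.

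For what it is worth, your outline is essentially the standard proof (as in Ogus or, in earlier form, Kato). The computation $\Omega^{1}_{\sA_Q/\sA_P}\cong\cO_{\sA_Q}\otimes_{\bZ}\operatorname{coker}(\theta^{\gp})$ and the ensuing characterization of formal unramifiedness are correct; so is the reduction of the third bullet to the first two via $(\theta^{\gp})^{\gp}=\theta^{\gp}$. One small point worth making explicit in the smoothness step: when you ``pass to groups'', you need that a group homomorphism $Q^{\gp}\to M_T^{\gp}$ lifting the given $Q^{\gp}\to M_{T'}^{\gp}$ automatically carries $Q$ into $M_T$; this follows because strictness gives $M_T/M_T^{*}\cong M_{T'}/M_{T'}^{*}$ and $M_T$ is exactly the preimage of $M_T/M_T^{*}$ in $M_T^{\gp}$. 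Your acknowledged obstacle---deducing the kernel condition from formal smoothness---is indeed where the work lies, and the cotangent-complex route you sketch (with $H_1\cong\cO_{\sA_Q}\otimes\ker\theta^{\gp}$ once the cokernel condition holds, so that the $\operatorname{Tor}_1$ contribution vanishes) is the cleanest way to close it.
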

	
	An immediate corollary of Theorem \ref{smooth} is that 
	any toric variety $\Spec k[Q]$ over a characteristic $0$ field $k$, with the log structure associated to the natural map $Q\to k[Q]$, is smooth over $k$ with trivial log structure.
	
	\section{Local coordinates}
	
	\subsection{Coordinates on infinitesimal disc}
	Let $D^2$ be the (formal) log scheme associated to the (formal) log ring
	$\alpha_{D^2}:\bN^2\to\bC[[x,y]]$.
	The log structure is given by
	$$\bN^2\oplus\left\{
	a_0+\sum_{i\geq0,j\geq0,ij\not=0}a_{ij}x^iy^j:a_0\not=0
	\right\}=\bN^2\oplus\bC[[x,y]]^*\to\bC[[x,y]],\quad ((m,n),f)\mapsto x^my^n f.$$
	The underlying scheme $\ul D^2$ consists of four base points:
	$(x),(y),(x,y),0$.
	The ring $\bC[[x,y]]$ is a complete topological $\bC$-algebra, endowed with the basis 
	$x^my^n\bC[[x,y]]$ ($m,n\geq0$)
	of neighborhoods near $0$.
	
	\begin{definition}
		A \textit{coordinate transformation} of $D^2$ is a continuous automorphism of $D^2$ preserving all base points.
	\end{definition}
	
	A coordinate transformation $\rho$ is determined by its action on the topological generators $x$ and $y$, and
	thus it can be represented by
	$
	\left(
	\begin{matrix}
		\rho(x)\\
		\rho(y)
	\end{matrix}
	\right)
	$
	for some $\rho(x)$, $\rho(y)\in\bC[[x,y]]$.
	
	\begin{prop}\label{torsor}
		Let $\Aut^0D^2$ be the collection of all coordinate transformations of $D^2$.
		Then
		$$\Aut^0D^2\simeq
		\left\{
		\left(
		\begin{matrix}
			\sum_{i,j\geq0} a_{ij}x^iy^j\\
			\sum_{i,j\geq0} a'_{ij}x^iy^j
		\end{matrix}
		\right)
		: a_{10}\not=0,a'_{01}\not=0, a_{00}=a'_{00}=a_{0i}=a'_{i0}=0\ \text{for all}\ i
		\right\}.$$
	\end{prop}
	
	\begin{proof}
		Let $\rho(x)=\sum_{i,j\geq0} a_{ij}x^iy^j$, 
		$\rho(y)=\sum_{i,j\geq0} a'_{ij}x^iy^j$ be a coordinate transformation of $D^2$.
		It follows immediately that $a_{00}=a'_{00}=0$; otherwise any nonzero base point would be mapped to the total ring.
		And clearly if $a_{0i}\not=0$ for some $i$, the base point $(x)$ would not be preserved.
		By the same argument, 
		we obtain $a'_{i0}=0$ for all $i$.
		
		Our task now is to show that $a_{10}\not=0$ and $a'_{01}\not=0$.
		Let $\theta(x)$ and $\theta(y)$ be the inverses of $\rho(x)$ and $\rho(y)$ respectively,
		written as
		$\theta(x)=\sum b_{ij}x^iy^j$, $\theta(y)=\sum b'_{ij}x^iy^j$.
		Denote by $\tilde{y}_k=\sum_l b_{kl}y^l$ and $\bar{y}_k=\sum_l b'_{kl}y^l$.
		Then $\tilde{y}_0=0$, and we have
		\begin{equation*}
			\begin{aligned}
				\rho\circ\theta(x)&=
				\sum_{i,j}a_{ij}(\tilde{y}_0+\tilde{y}_1x+O(x^2))^i(\bar{y}_0+\bar{y}_1x+O(x^2))^j\\
				&=\sum_{i,j}a_{ij}(\tilde{y}_0^i\bar{y}_0^j+(j\tilde{y}_0^i\bar{y}_0^{j-1}\bar{y}_1+i\tilde{y}_0^{i-1}\tilde{y}_1\bar{y}_0^j)x+O(x^2)).
			\end{aligned}
		\end{equation*}
		The coefficient of $x$ in $\rho\circ\theta(x)$ is 
		$$\operatorname{coef} x=\sum_{i,j}a_{ij}(j\tilde{y}_0^i\bar{y}_0^{j-1}\bar{y}_1+i\tilde{y}_0^{i-1}\tilde{y}_1\bar{y}_0^j).$$
		Since $a_{0i}=0$ and $\tilde{y}_0=0$, 
		we see that
		$$\operatorname{coef} x=\sum_j a_{1j}\tilde{y}_1\bar{y}_0^j
		=\sum_j a_{1j}\left(\sum_l b_{1l}y^l\right)\left(\sum_lb'_{0l}y^l\right)^j$$
		whose constant term should be
		$$\sum_j a_{1j}b_{10}{b'_{00}}^j=a_{10}b_{10}=1.$$
		Thus $a_{10}\not=0$, 
		and $b_{10}\not=0$.
		Furthermore, the coefficient of the linear term in $y$ in $\operatorname{coef}x$ is supposed to be
		$$a_{10}b_{11}+a_{11}b_{10}{b'_{01}}=0$$
		which determines $b_{11}$ (since $b_{10}\not=0$) 
		and all other coefficients can be obtained step by step. 
		The proof for $\rho(y)$ and $\theta(y)$ follows analogously.
		
		Finally, we incorporate the assertions into log pattern.
		For $x^my^n\in\bC[[x,y]]$, its image under the coordinate transformation is $\rho(x)^m\rho(y)^n$.
		By the above discussion, there exists $g_{m,n}\in\bC[[x,y]]^*$ such that $\rho(x)^m\rho(y)^n=x^my^ng_{m,n}$.
		Now we give the corresponding log ring morphism:
		$$
		\begin{tikzcd}
			{\bC[[x,y]]} \arrow[r, "\rho^\sharp"]                                     & {\bC[[x,y]]}                                       &  & x^my^nf \arrow[r, maps to]                        & \rho(x)^m\rho(y)^n\rho(f)                   \\
			{\bN^2\oplus\bC[[x,y]]^*} \arrow[u, "\alpha_{D^2}"] \arrow[r, "\rho^\flat"] & {\bN^2\oplus\bC[[x,y]]^*} \arrow[u, "\alpha_{D^2}"'] &  & {((m,n),f)} \arrow[u, maps to] \arrow[r, maps to] & {((m,n),g_{m,n}\rho(f))} \arrow[u, maps to]
		\end{tikzcd}
		$$
		Moreover the monoid morphism $\bar{\rho^\flat}:\bN^2\to\bN^2$ induced by  $\rho^\flat$ is the identity map,
		and hence $\rho^\flat$ is strict by definition.
		Thus $\rho$ is an isomorphism of log rings,
		which completes the proof.
	\end{proof}

	\subsection{Coordinate changes for $\beta\gamma-bc$ system}\label{bgbc}
	
	\begin{definition}[\cite{frenkelvertex}-2.2.2]
		The \textit{normal ordered product} of two fields $A(z)=\sum_{n\in\bZ} A_{(n)}z^{-n-1}$, $B(w)=\sum_{n\in\bZ} B_{(n)}w^{-n-1}$
		on $V$ is defined as the formal power series
		$$:A(z)B(w):=A(z)_+B(w)+B(w)A(z)_-$$
		where for a formal power series $f(z)=\sum_{n\in\bZ}f_nz^n$, we write
		$$f(z)_+=\sum_{n\geq0}f_nz^n,\quad f(z)_-=\sum_{n<0}f_nz^n.$$
		The \textit{normal ordered product} 
		$:ab:$ of two elements $a,b\in V$
		is then defined to be
		$a_{(-1)}b$.
	\end{definition}
	
	\begin{rk}\label{regope}
		A basic and important property of normal ordered products of fields is that 
		$:A(z)B(w):$ 
		is regular in $z-w$.
		In fact,
		for any $v\in V$ and $\vp\in V^*=\Hom_\bC(V,\bC)$,
		we have that
		\begin{equation*}
			\begin{aligned}
			    &\langle
				\vp,:A(z)B(w):v
				\rangle
				\\
				&=
				\langle
				\vp,
				\sum_{n\in\bZ}\left(
				\sum_{m<0}A_{(m)}B_{(n)}vz^{-m-1}
				+\sum_{m\geq0}B_{(n)}A_{(m)}vz^{-m-1}
				\right)
				\rangle
				w^{-n-1}
				\\
				&=
				\sum_{n\in\bZ}\sum_{m<0}\langle
				\vp,A_{(m)}B_{(n)}v\rangle
				z^{-m-1}w^{-n-1}
				+
				\sum_{n\in\bZ}\sum_{m\geq0}\langle
				\vp,B_{(n)}A_{(m)}v\rangle
				z^{-m-1}w^{-n-1}
			\end{aligned}
		\end{equation*}
	    It is easy to see that both two terms of summation belong to
	    $\bC[[z,w]][z^{-1},w^{-1}]$.
	    This is equivalent to say that
	    $\langle
	    \vp,:A(z)B(w):v
	    \rangle$
	    is regular in $z-w$.
	\end{rk}
	
	The $\beta\gamma-bc$ \textit{system} is a conformal vertex algebra whose generating fields are even fields $\beta(z)$, $\gamma(z)$ and odd fields $b(z)$, $c(z)$, 
	with nontrivial OPEs:
	$$\beta(z)\gamma(w)=\frac{1}{z-w}+\operatorname{reg.}\sim\frac{1}{z-w},
	\quad 
	b(z)c(w)=\frac{1}{z-w}+\operatorname{reg.}\sim\frac{1}{z-w}$$

	Let us explain the above statements:
	\\
	(1)
	The vector space of the $\beta\gamma-bc$ system is spanned by elements of the form
	$$\beta_{k_1}...\beta_{k_s}\gamma_{n_1}...\gamma_{n_r}b_{l_1}...b_{l_t}c_{m_1}...c_{m_u}\0,$$
	where $\0$ is the vacuum vector such that 
	$$\beta_{n\geq0}\0=\gamma_{n>0}\0=b_{n\geq0}\0=c_{n>0}\0=0.$$
	We sometimes omit the vacuum vector when we  write a Fourier coefficient acting on it (for example, write $\gamma_{-1}$ rather than $\gamma_{-1}\0$) if no confusion arises.
	\\
	(2)
	We set
	$$\gamma(z)=\gamma_0(z)=\sum_{n\in\bZ}\gamma_nz^{-n},\quad
	\beta(z)=\beta_{-1}(z)=\sum_{n\in\bZ}\beta_nz^{-n-1},$$
	$$
	c(z)=c_0(z)=\sum_{n\in\bZ}c_nz^{-n},\quad
	b(z)=b_{-1}(z)=\sum_{n\in\bZ}b_nz^{-n-1}
	$$
	and the vertex operation is induced by
	$$\gamma_{-n}(z)=\partial_z^{(n)}\gamma(z),\quad
	\beta_{-n-1}(z)=\partial_z^{(n)}\beta(z),\quad
	c_{-n}(z)=\partial_z^{(n)}c(z),\quad
	b_{-n-1}(z)=\partial_z^{(n)}b(z)
	$$
	for $n>0$ where $\partial^{(n)}_z=\frac{1}{n!}\partial^n_z$.
	\\
	(3)
	The translation operator is $T=L_{-1}$
	where
	$$L(z)=\sum_{n\in\bZ}L_nz^{-n-2}=:\partial_z\gamma(z)\beta(z):+:\partial_z c(z)b(z):.$$
	(4) ``$\operatorname{reg.}$'' in OPEs denotes some regular formal power series in $z-w$.
	The regular terms do not contribute to the relation of endomorphisms,
	and so they are usually omitted. 
	In particular, the nontrivial relations are
	\begin{equation}\label{bracket}
		[\beta_m,\gamma_n]=\delta_{m,-n},\quad
		[b_m,c_n]_+=\delta_{m,-n}.
	\end{equation}
	And we also have that
	$$\gamma(z)\beta(w)\sim\frac{-1}{z-w}
	,\quad
	c(z)b(w)\sim\frac{1}{z-w}.$$
	
	We can verify that the above setting satisfies the following reconstruction theorem, and hence the $\beta\gamma-bc$ system is a vertex algebra as we claimed above.
	
	\begin{thm}[\cite{frenkelvertex}-2.3.10]
		Let $V$ be a vector space, $\0$ a non-zero vector, and $T$ an endomorphism of $V$.
		Let $S$ be a countable ordered set and $\{a^\alpha:\alpha\in S\}$ a collection of vectors in $V$.
		Suppose we are also given fields $a^\alpha(z)=\sum_{n\in\bZ}a^\alpha_{(n)}z^{-n-1}$ such that the following conditions hold:
		\\
		$\bullet$ For all $\alpha$, $a^\alpha(z)\0\in V[[z]]$;
		\\
		$\bullet$ $T\0=0$ and $[T,a^\alpha(z)]=\partial_z a^\alpha(z)$ for all $\alpha$;
		\\
		$\bullet$ For any pair of fields $a^{\alpha_1}(z)$, $a^{\alpha_2}(z)$, there exists $N\in\bZ_{>0}$ such that $(z-w)^N[a^{\alpha_1}(z),a^{\alpha_2}(w)]=0$ as a formal power series in $(\End V)[[z^{\pm1},w^{\pm1}]]$.
		\\
		$\bullet$ $V$ has a basis of vectors $a^{\alpha_1}_{(j_1)}...a^{\alpha_m}_{(j_m)}\0$
		where $j_1\leq ...\leq j_m<0$, and if $j_i=j_{i+1}$ then $\alpha_i\leq \alpha_{i+1}$ with respect to th given order on $S$.
		Then the assignment
		$$Y\left(a^{\alpha_1}_{(j_1)}...a^{\alpha_m}_{(j_m)}\0,z\right)=:\partial^{(-j_1-1)}_za^{\alpha_1}(z)...\partial^{(-j_m-1)}_za^{\alpha_m}(z):$$
		defines a vertex algebra structure on $V$. 
		Moreover if $V$ is a $\bZ$-graded vector space,
		$\deg\0=0$, 
		the vectors $a^\alpha$ are homogeneous, $\deg T=1$, and the fields $a^\alpha(z)$ have conformal dimension $\deg a^\alpha$, then $V$ is a $\bZ$-graded vertex algebra.
	\end{thm}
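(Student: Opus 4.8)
The plan is to verify the four vertex-algebra axioms for the operation $Y$ defined by the displayed formula, taking the hypotheses as the base case of several inductions on the length $m$ of the ordered monomials $a^{\alpha_1}_{(j_1)}\cdots a^{\alpha_m}_{(j_m)}\0$. First I would check that $Y$ is well defined and valued in fields: by hypothesis these monomials form a basis of $V$, so the formula extends uniquely to a linear map $V\to(\End V)[[z^{\pm1}]]$, and each $Y(A,z)$ is a field because a derivative of a field is a field and a normal ordered product of two fields is a field, applied inductively to the product $:\partial_z^{(k_1)}a^{\alpha_1}(z)\cdots\partial_z^{(k_m)}a^{\alpha_m}(z):$ bracketed from the right. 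The vacuum axiom $Y(\0,z)=\id_V$ is the empty-product case. For the creation part I would first prove the auxiliary identity $a^\alpha(z)\0=e^{zT}a^\alpha$: from $T\0=0$ and $[T,a^\alpha(z)]=\partial_z a^\alpha(z)$ one gets $\partial_z\bigl(a^\alpha(z)\0\bigr)=T\,a^\alpha(z)\0$, an ODE in $V[[z]]$ whose unique solution with value $a^\alpha=a^\alpha_{(-1)}\0$ at $z=0$ is $e^{zT}a^\alpha$.

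Next I would propagate this: evaluating the nested normal ordered product on $\0$ and moving all annihilation-type summands to the right, one obtains $Y(A,z)\0=e^{zT}A\in V[[z]]$ for every $A$, which yields $A_{(n)}\0=0$ for $n\ge0$ and $A_{(-1)}\0=A$. For the translation axiom I would establish $[T,Y(A,z)]=\partial_z Y(A,z)$ by induction on $m$, using the derivation-type identity $[T,:X(z)W(z):]=:[T,X(z)]W(z):+:X(z)[T,W(z)]:$ for the inductive step; combined with $T\0=0$ and $Y(A,z)\0=e^{zT}A$ this gives $Y(TA,z)=\partial_z Y(A,z)$.

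The crux is locality. The hypothesis supplies mutual locality of the generating fields $a^\alpha(z)$ and $a^\beta(w)$. I would upgrade this to mutual locality of all $Y(A,z)$, $Y(B,w)$ by induction on the total length, the inductive step resting on two stability properties: (i) if $X(z)$ and $W(w)$ are mutually local, so are $\partial_z X(z)$ and $W(w)$; and (ii) Dong's Lemma, that if $X(z)$, $W(z)$, $U(z)$ are pairwise mutually local fields then $:X(z)W(z):$ is mutually local with $U(z)$. Since every $Y(A,z)$ is built from the $a^\alpha(z)$ by iterated derivatives and normal ordered products, pairwise mutual locality then propagates, and as a vertex algebra only requires the \emph{existence} of some $N$ with $(z-w)^N[Y(A,z),Y(B,w)]=0$, this is precisely the locality axiom. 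I expect Dong's Lemma to be the main obstacle, being the one genuinely non-formal point: its proof requires expanding $(z-w)^{-1}$ in the two regions $|z|>|w|$ and $|w|>|z|$, manipulating the formal delta function $\delta(z-w)=\sum_{n\in\bZ}z^nw^{-n-1}$ and its derivatives, and observing that finitely many such delta-type obstructions can all be cleared by a single large enough power of $(z-w)$.

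Finally I would record the consistency points that make the statement usable: the length-one case gives $Y(a^\alpha_{(-1)}\0,z)=a^\alpha(z)$, so the prescribed fields really are the images of the generators; and in the graded setting, under the conventions $\deg\0=0$, $\deg T=1$, $\deg a^\alpha_{(n)}=-n-1+\deg a^\alpha$, a direct degree count through the nested product formula gives $\deg A_{(n)}=-n-1+\deg A$, making $V$ a $\bZ$-graded vertex algebra. An alternative organization, which I would mention but not pursue, is to invoke the Goddard uniqueness theorem to identify the constructed $Y$ and shorten the axiom check, at the cost of first proving that uniqueness statement.
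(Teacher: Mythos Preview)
The paper does not prove this theorem; it is quoted verbatim as \cite{frenkelvertex}-2.3.10 and invoked without argument, so there is no ``paper's own proof'' to compare against. Your sketch is the standard Frenkel--Ben-Zvi proof of the reconstruction theorem and is essentially correct: well-definedness via the basis hypothesis, the creation identity $Y(A,z)\0=e^{zT}A$ from the ODE $\partial_z(a^\alpha(z)\0)=T\,a^\alpha(z)\0$, translation by induction through the normal ordered product, and locality by Dong's Lemma plus stability under $\partial_z$. One small caution: the step ``evaluating the nested normal ordered product on $\0$ and moving all annihilation-type summands to the right'' deserves a careful inductive formulation, since you must show $:X(z)Y(A',z):\0\in V[[z]]$ and identify its value at $z=0$; the cleanest route is to prove $Y(A,z)\0=e^{zT}A$ by induction on $m$ using $:a^\alpha(z)W(z):\0=a^\alpha(z)_+\,W(z)\0$ together with $a^\alpha(z)_+\0=a^\alpha(z)\0=e^{zT}a^\alpha$ and the already-established $[T,\,\cdot\,]$ relation. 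Other than that, nothing is missing.
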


	Let $D$ be the formal log scheme $\Spf\bC[[\gamma]]$,
	endowed with the log structure associated to
	$\bN\to\bC[[\gamma]]$, $n\mapsto \gamma^n$.
	Following \cite{malikov1999chiral}-3.6, consider the formal $1|1$-dimensional superscheme $\tilde{D}=\Pi TD$ where
	$TD$ is the total space of the tangent bundle over $D$ and $\Pi$ is the parity change functor.
	Then the underlying topological space of $\tilde{D}$ is the same as that of $D$, i.e. a single point, 
	and the structure sheaf $\cO_{\tilde{D}}$ is isomorphic to the de Rham algebra of differential forms on $D$.
	Concretely $\tilde{D}$ admits an even coordinate $\gamma$ and an odd coordinate $c=d1=d\alpha_{D^2}(1)/\alpha_{D^2}(1)=d\gamma/\gamma$ (the log differential)
	for $1\in\bN$.
	Geometrically, the fields $\beta$'s (resp. $b$'s) correspond to the vector fields $\partial_\gamma$'s (resp. $\partial_c$'s).
	
	\begin{rk}
		The geometric interpretation of $\beta,\gamma, b, c$ in log geometry follows the same idea as in the classical case,
		which satisfies the relation (\ref{bracket}).
		This idea suggests how to define the coordinate change formulas for the fields $\beta(z),b(z),c(z)$:
		they should satisfy the same relations as in the classical case,
		as shown in the next theorem.
	\end{rk}
	
	Let $f$ be a coordinate transformation of $D$ and $g$ its inverse.
	We denote by $\tilde{\gamma}=f(\gamma)$, $\gamma=g(\tilde{\gamma})$, 
	and use a tilde above a vector to denote the coordinate changed one.
	After a tedious computation (referring to \cite{malikov1999chiral}-3.6) we yield the following coordinate changes of the generating fields of the $\beta\gamma-bc$ system (in log setting):
	\begin{equation}\label{trans}
		\begin{aligned}
			\tilde{c}&=\frac{df(\gamma)}{f(\gamma)}=\frac{\gamma\partial_\gamma f(\gamma)}{f(\gamma)}c,
			\\
			\tilde{b}&=f(\gamma)\partial_{\tilde{\gamma}}g(\tilde{\gamma})|_{\tilde{\gamma}=f(\gamma)}\partial_\gamma=\frac{f(\gamma)}{\gamma}\partial_{\tilde{\gamma}}g(\tilde{\gamma})|_{\tilde{\gamma}=f(\gamma)}b,
			\\
			\tilde{\beta}&=\partial_{\tilde{\gamma}}g(\tilde{\gamma})|_{\tilde{\gamma}=f(\gamma)}\partial_\gamma +\partial_{\tilde{\gamma}}(
			\frac{\tilde{\gamma}\partial_{\tilde{\gamma}}g(\tilde{\gamma})}{g(\tilde{\gamma})}\tilde{c})|_{\tilde{\gamma}=f(\gamma)}\partial_c
			\\
			&=\partial_{\tilde{\gamma}}g(\tilde{\gamma})|_{\tilde{\gamma}=f(\gamma)}\beta
			+\partial_{\tilde{\gamma}}^2g(\tilde{\gamma})|_{\tilde{\gamma=f(\gamma)}}\partial_\gamma f(\gamma)cb
			+\partial_{\tilde{\gamma}}(\frac{\tilde{\gamma}}{g(\tilde{\gamma})})\partial_{\tilde{\gamma}}g(\tilde{\gamma})|_{\tilde{\gamma}=f(\gamma)}\frac{\gamma\partial_{\gamma}f(\gamma)}{f(\gamma)}cb.
		\end{aligned}
	\end{equation}
	Due to \cite{frenkelvertex}-6.2,
	the coordinate transformation $f$ can be represented by
	$$f(\gamma)=a_1\gamma+a_2\gamma^2+...\in\bC[[\gamma]]$$ 
	with $a_1\not=0$
	and thus $f(\gamma)/\gamma$ is a unit in $\bC[[\gamma]]$.
	This also implies that $\gamma/f(\gamma)\in\bC[[\gamma]]$.
	Together with \cite{malikov1999chiral}-3.1,
	we conclude that vectors $\tilde{\gamma}$, $\tilde{c}$, $\tilde{b}$ and $\tilde{\beta}$ are well-defined.
	
	Therefore we obtain the coordinate changes of the corresponding fields:
	\begin{equation}\label{tilde}
		\begin{aligned}
			&\tilde{\gamma}(z)=f(\gamma)(z),
			\\
			&\tilde{c}(z)=\frac{\gamma\partial_\gamma f(\gamma)}{f(\gamma)}(z)c(z),
			\\
			&\tilde{b}(z)=:\frac{f(\gamma)}{\gamma}\partial_{\tilde{\gamma}}g(\tilde{\gamma})|_{\tilde{\gamma}=f(\gamma)}(z)b(z):,
			\\
			&
			\begin{aligned}
				\tilde{\beta}(z)
				=:\partial_{\tilde{\gamma}}g(\tilde{\gamma})|_{\tilde{\gamma}=f(\gamma)}(z)\beta(z):&
				+::\partial_{\tilde{\gamma}}^2g(\tilde{\gamma})|_{\tilde{\gamma}=f(\gamma)}\partial_\gamma f(\gamma)(z)c(z):b(z):
				\\
				&+::\partial_{\tilde{\gamma}}(\frac{\tilde{\gamma}}{g(\tilde{\gamma})})\partial_{\tilde{\gamma}}g(\tilde{\gamma})|_{\tilde{\gamma}=f(\gamma)}\frac{\gamma\partial_{\gamma}f(\gamma)}{f(\gamma)}(z)c(z):b(z):.
			\end{aligned}
		\end{aligned}
	\end{equation}
	
	\begin{thm}\label{change}
		The fields $\tilde{\gamma}(z)$, $\tilde{c}(z)$, $\tilde{b}(z)$ and $\tilde{\beta}(z)$ satisfy the following relations:
		\begin{equation*}
			\begin{aligned}
				\tilde{\beta}(z)\tilde{\gamma}(w)&\sim\frac{1}{z-w},\quad
				\tilde{c}(z)\tilde{b}(w)\sim\frac{1}{z-w},
				\\
				A(z)B(w)&\sim0\quad
				\text{for all}\ A,B=\tilde{\gamma}, \tilde{c}, \tilde{b}, \tilde{\beta}\
				\text{but the above two cases.}
			\end{aligned}
		\end{equation*}
	\end{thm}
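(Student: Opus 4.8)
The plan is to verify all the operator product expansions among $\tilde\gamma(z),\tilde c(z),\tilde b(z),\tilde\beta(z)$ by a direct Wick computation in the $\beta\gamma$-$bc$ system. The only input needed is that the sole non-regular elementary contractions are
$$
\beta(z)\gamma(w)\sim\tfrac{1}{z-w},\quad \gamma(z)\beta(w)\sim\tfrac{-1}{z-w},\quad c(z)b(w)\sim\tfrac{1}{z-w},\quad b(z)c(w)\sim\tfrac{1}{z-w},
$$
each of pole order one, every other pairing among $\beta,\gamma,b,c$ being regular. Since by (\ref{tilde}) each of $\tilde\gamma,\tilde c,\tilde b$ is a normal-ordered product of a power series in $\gamma$ with one of $1,c,b$, and $\tilde\beta$ is such a product (with $\beta$) together with a $cb$-correction whose coefficient is again a power series in $\gamma$, every OPE is a finite sum of Wick contractions contributing poles of order at most two. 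Thus the statement reduces to computing a handful of residues, plus one double-pole coefficient for $\tilde\beta(z)\tilde\beta(w)$.

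I would first dispose of the pairs that are regular for trivial reasons: $\{\tilde\gamma,\tilde\gamma\}$, $\{\tilde\gamma,\tilde c\}$, $\{\tilde\gamma,\tilde b\}$, $\{\tilde c,\tilde c\}$ and $\{\tilde b,\tilde b\}$. None of these admits an admissible contraction ($\tilde\gamma$ contains neither $\beta$ nor $b,c$; $\tilde c,\tilde b$ contain no $\beta$; and $\gamma$-$\gamma$, $c$-$c$, $b$-$b$ contractions vanish), so these OPEs are regular with no further computation. Next the two structural pairs: in $\tilde\beta(z)\tilde\gamma(w)$ only $\beta(z)$ from the linear part of $\tilde\beta(z)$ contracts, with the $\gamma(w)$'s inside $\tilde\gamma(w)=f(\gamma)(w)$, giving a simple pole with residue $\partial_{\tilde\gamma}g(\tilde\gamma)|_{\tilde\gamma=f(\gamma)}(z)\,\partial_\gamma f(\gamma)(w)$, which equals $1$ after Taylor-expanding at $w$ and applying the chain rule to $g\circ f=\id$; the $cb$-term of $\tilde\beta$ contracts trivially with $\tilde\gamma$. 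In $\tilde c(z)\tilde b(w)$ the only surviving contraction is $c(z)$ with $b(w)$, whose residue is $\tfrac{\gamma\partial_\gamma f}{f}(w)\cdot\tfrac{f}{\gamma}\partial_{\tilde\gamma}g(\tilde\gamma)|_{\tilde\gamma=f(\gamma)}(w)$; the $f(\gamma)$'s cancel and the chain rule again gives residue $1$. Hence both structural OPEs are $\tfrac{1}{z-w}+\mathrm{reg.}$, and the opposite orders $\tilde\gamma\tilde\beta$, $\tilde b\tilde c$ go the same way.

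The remaining pairs $\{\tilde\beta,\tilde b\}$, $\{\tilde\beta,\tilde c\}$, $\{\tilde\beta,\tilde\beta\}$ are where the $cb$-correction in $\tilde\beta$ earns its keep, and this is the step I expect to be the main obstacle. In $\tilde\beta(z)\tilde b(w)$ and $\tilde\beta(z)\tilde c(w)$ two contractions hit the simple pole: the $\beta$ of the linear part of $\tilde\beta$ contracting a $\gamma$ in the coefficient of $\tilde b(w)$ resp. $\tilde c(w)$ — an ``anomalous'' term, a derivative of a transition coefficient — and the $c$ resp. $b$ of the $cb$-term of $\tilde\beta$ contracting the $b(w)$ resp. $c(w)$; one must check that these two residues are exactly opposite, the relative sign being the fermionic reordering sign, so that the OPE is regular. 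Reducing this to an identity among $\partial_\gamma f$, $\partial_\gamma^2 f$ and the first and second derivatives of $g$ at $f(\gamma)$ — all obtained by differentiating $g\circ f=\id$, now with the logarithmic coefficient $\tfrac{\gamma\partial_\gamma f}{f}$ in place of $\partial_\gamma f$ — is the crux. For $\tilde\beta(z)\tilde\beta(w)$ there is in addition a double pole, from the double contraction of the two linear parts and from the $cb$-$cb$ contraction; I would show these cancel, and that the leftover simple poles — including the one produced when the double-pole coefficient is Taylor-expanded, and the cross terms between a linear part and a $cb$-term — cancel in pairs by the same identities. The delicate point throughout is the bookkeeping of these contractions with correct fermionic signs and correct treatment of the nested normal orderings in $\tilde\beta$; the computation parallels \cite{malikov1999chiral}-3.6 (compare \cite{frenkelvertex}), the only new ingredient being the logarithmic twist of the transition functions, which leaves the algebraic identities forcing the cancellations intact.
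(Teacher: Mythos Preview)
Your proposal is correct and follows the same Wick-contraction route as the paper. The one substantive difference is economy: the paper observes that the transformation laws \eqref{trans} for $\tilde\gamma$ and $\tilde\beta$ coincide with the classical (non-log) ones of \cite{malikov1999chiral}-3.6, so every OPE not involving $\tilde c$ or $\tilde b$---in particular $\tilde\beta\tilde\gamma$ and the $\tilde\beta\tilde\beta$ pairing with its double poles---is literally the computation already carried out there and is cited rather than redone. Only the three pairs $\tilde c\tilde b$, $\tilde b\tilde\beta$, $\tilde c\tilde\beta$ are treated, these being the ones where the extra logarithmic factors $\gamma\partial_\gamma f/f$ and $f/\gamma$ actually enter. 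You instead propose to verify every case from scratch; this is fine, but the $\tilde\beta\tilde\beta$ analysis you flag as the main obstacle is unchanged from \cite{malikov1999chiral} and need not be repeated. For the three log-sensitive cases your two-summand decomposition of $\tilde\beta$ matches the paper's split into \circled{1} and \circled{2}; one minor divergence is that you describe the regularity of $\tilde\beta\tilde b$ and $\tilde\beta\tilde c$ as a cancellation between two nonzero residues, whereas the paper argues that each of \circled{1} and \circled{2} is regular on its own.
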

	
	\begin{proof}
		The nontrivial relations are
		$\tilde{c}(z)\tilde{b}(w)$, $\tilde{b}(z)\tilde{\beta}(w)$ and $\tilde{c}(z)\tilde{\beta}(w)$,
		and the others are clear or exactly the same to the classical results in \cite{malikov1999chiral}-3.6.
		Since $\gamma$ commutes with $c$,
		we have that
		\begin{equation*}
			\begin{aligned}
				\tilde{c}(z)\tilde{b}(w)&=\frac{\gamma\partial_\gamma f(\gamma)}{f(\gamma)}(z)c(z)
				:\frac{f(\gamma)}{\gamma}\partial_{\tilde{\gamma}}g(\tilde{\gamma})|_{\tilde{\gamma}=f(\gamma)}(w)b(w):\\
				&=\partial_\gamma f(\gamma)(z)c(z)
				\partial_{\tilde{\gamma}}g(\tilde{\gamma})|_{\tilde{\gamma}=f(\gamma)}(w)_+b(w)+
				\partial_\gamma f(\gamma)(z)c(z)b(w)
				\partial_{\tilde{\gamma}}g(\tilde{\gamma})|_{\tilde{\gamma}=f(\gamma)}(w)_-\\
				&\sim\frac{1}{z-w}\quad \text{by the exactly same proof in \cite{malikov1999chiral}-Theorem 3.7}.
			\end{aligned}
		\end{equation*}
		The proofs of $\tilde{b}(z)\tilde{\beta}(w)$ and $\tilde{c}(z)\tilde{\beta}(w)$ are quite similar, and so we only provide the former.
		We decompose  $\tilde{b}(z)\tilde{\beta}(w)=\mathfrak{1}+\mathfrak{2}$
		where $\mathfrak{1}$ (resp. $\mathfrak{2}$) is the product of $\tilde{b}(z)$ and the first (resp. second and third) term of $\tilde{\beta}(z)$ in \eqref{tilde}.
		The following two relations will be used, 
		referring to \cite{malikov1999chiral}-(3.18):
		$$
		h(\gamma)(z)\beta(w)\sim-\frac{\partial_{\gamma}h(w)}{z-w},\quad
		\beta(z)h(\gamma)(w)\sim
		\frac{\partial_{\gamma}h(w)}{z-w}$$
		for each formal power series $h$ over $\bC$.
		For the first term of $\tilde{b}(z)\tilde{\beta}(w)$, we have
		\begin{equation*}
			\begin{aligned}
				\mathfrak{1}&=
				:\frac{f(\gamma)}{\gamma}\partial_{\tilde{\gamma}}g(\tilde{\gamma})|_{\tilde{\gamma}=f(\gamma)}(z)b(z):
				:\partial_{\tilde{\gamma}}g(\tilde{\gamma})|_{\tilde{\gamma}=f(\gamma)}(w)\beta(w):\\
				&
				=\frac{f(\gamma)}{\gamma}\partial_{\tilde{\gamma}}g(\tilde{\gamma})|_{\tilde{\gamma}=f(\gamma)}(z)b(z)
				\left(
				\partial_{\tilde{\gamma}}g(\tilde{\gamma})|_{\tilde{\gamma}=f(\gamma)}(w)_+\beta(w)
				+\beta(w)\partial_{\tilde{\gamma}}g(\tilde{\gamma})|_{\tilde{\gamma}=f(\gamma)}(w)_-
				\right)
				\\
				&
				=b(z)\partial_{\tilde{\gamma}}g(\tilde{\gamma})|_{\tilde{\gamma}=f(\gamma)}(w)_+\frac{f(\gamma)}{\gamma}\partial_{\tilde{\gamma}}g(\tilde{\gamma})|_{\tilde{\gamma}=f(\gamma)}(z)\beta(w)
				+
				b(z)\frac{f(\gamma)}{\gamma}\partial_{\tilde{\gamma}}g(\tilde{\gamma})|_{\tilde{\gamma}=f(\gamma)}(z)\beta(w)\partial_{\tilde{\gamma}}g(\tilde{\gamma})|_{\tilde{\gamma}=f(\gamma)}(w)_-
				\\
				&
				\sim -\frac{1}{z-w}\partial_\gamma
				\left(
				\frac{f(\gamma)}{\gamma}
				\partial_{\tilde{\gamma}}g(\tilde{\gamma})|_{\tilde{\gamma=f(\gamma)}}
				\right)
				\partial_{\tilde{\gamma}}g(\tilde{\gamma})|_{\tilde{\gamma=f(\gamma)}}(w)
				b(z)
				\\
				&
				\sim -\frac{1}{z-w}
				:(\partial_\gamma f(\gamma))
				\left(
				\partial_{\tilde{\gamma}}\left(\frac{\tilde{\gamma}}{g(\tilde{\gamma})}\right)|_{\tilde{\gamma=f(\gamma)}}\partial_{\tilde{\gamma}}g(\tilde{\gamma})|_{\tilde{\gamma=f(\gamma)}}
				+
				\frac{\tilde{\gamma}}{g(\tilde{\gamma})}
				\partial^2_{\tilde{\gamma}}g(\tilde{\gamma})|_{\tilde{\gamma=f(\gamma)}}
				\right)
				\partial_{\tilde{\gamma}}g(\tilde{\gamma})|_{\tilde{\gamma=f(\gamma)}}
				b:(w)
			\end{aligned}
		\end{equation*}
		where the first ``$\sim$''  is due to 
		$\partial_\gamma=(\partial_\gamma f(\gamma))\partial_{\tilde{\gamma}}$,
		and the second ``$\sim$'' follows from the formal Taylor formula $b(z)=b(w)+(z-w)\partial_wb(w)+...$
		(\cite{frenkelvertex}-3.2.4).
		For the second and third terms,
		by the Wick theorem 
		(\cite{Kac}-Theorem 3.3)
		we obtain that
		\begin{equation*}
			\begin{aligned}
				\mathfrak{2}\sim\frac{1}{z-w}
				:\frac{\tilde{\gamma}}{g(\tilde{\gamma})}
				\partial_{\tilde{\gamma}}g(\tilde{\gamma})|_{\tilde{\gamma}=f(\gamma)}
				\Bigg(&
				\partial^2_{\tilde{\gamma}}g(\tilde{\gamma})|_{\tilde{\gamma}=f(\gamma)}\partial_{\gamma}f(\gamma)\\
				&+
				\partial_{\tilde{\gamma}}
				\left(\frac{\tilde{\gamma}}{g(\tilde{\gamma})}\right)
				|_{\tilde{\gamma}=f(\gamma)}
				\partial_{\tilde{\gamma}}g(\tilde{\gamma})|_{\tilde{\gamma}=f(\gamma)}
				\frac{\partial_{\gamma}f(\gamma)\cdot g(\tilde{\gamma})}{\tilde{\gamma}}|_{\tilde{\gamma}=f(\gamma)}
				\Bigg)
				b:(w).
			\end{aligned}
		\end{equation*}
		Then it is direct to see that  $\mathfrak{1}$ and $\mathfrak{2}$ cancel out,
		and hence $\tilde{b}(z)\tilde{\beta}(w)$  is regular.
		Consequently the fields under coordinate changes  have the same OPEs as the original ones.
	\end{proof}
	
	\begin{rk}\label{Q0}
		Let $Q=\beta_{-1}c_0$,
		which plays the role of chiral de Rham differential in the classical case
		and will be discussed in Section 4.
		We work with $\tilde{Q}(z)=:\tilde{\beta}(z)\tilde{c}(z):$.
		Let
		$$
		A(\gamma)=\partial_{\tilde{\gamma}}g(\tilde{\gamma})|_{\tilde{\gamma}=f(\gamma)},
		\quad
		B(\gamma)=\partial_{\tilde{\gamma}}^2g(\tilde{\gamma})|_{\tilde{\gamma=f(\gamma)}}\partial_\gamma f(\gamma),
		\quad
		C(\gamma)=\partial_{\tilde{\gamma}}(\frac{\tilde{\gamma}}{g(\tilde{\gamma})})\partial_{\tilde{\gamma}}g(\tilde{\gamma})|_{\tilde{\gamma}=f(\gamma)}\frac{\gamma\partial_{\gamma}f(\gamma)}{f(\gamma)}.
		$$
		Denote by $h(\gamma)=\frac{\gamma \partial_{\gamma}f(\gamma)}{f(\gamma)}$,
		and then we have 
		\begin{equation*}
			\begin{aligned}
				\tilde{Q}(z)=
				::A(\gamma)(z)\beta(z)::h(\gamma)(z)c(z)::
				+::(B(\gamma)(z)+C(\gamma)(z))c(z)b(z):h(\gamma)(z)c(z):
			\end{aligned}
		\end{equation*}
		Since $\gamma$ commutes with $c$ and the fact that $c$ is odd,
		it follows that the third term
		$$::C(\gamma)(z)c(z)b(z):
		h(\gamma)(z)c(z):=
		:C(\gamma)(z)h(\gamma)(z)c(z):.
		$$
		Now we deal with the first two terms
		\begin{equation*}
			\begin{aligned}
				::A(\gamma)(z)\beta(z)::h(\gamma)(z)c(z)::
				+
				::B(\gamma)(z)c(z)b(z):
				h(\gamma)(z)c(z):
			\end{aligned}
		\end{equation*}
		which is almost of the same form with that in the classical case.
		Indeed, comparing this with (4.2) in \cite{malikov1999chiral}-Theorem 4.2,
		one observes that if we replace 
		$\frac{\partial g}{\partial b}$ by $h(\gamma)$,
		it follows exactly that
		$$
		\tilde{Q}(z)=
		:\frac{\gamma}{f(\gamma)}Q:(z)+
		\partial_z
		(:\partial^2_{\tilde{\gamma}}g(\tilde{\gamma})|_{\tilde{\gamma=f(\gamma)}}h(\gamma)\tilde{c}:(z))
		+:C(\gamma)h(\gamma)c:(z).$$
	\end{rk}

	\begin{rk}
		We give a brief explanation of the well-definedness of $\frac{f(\gamma)}{\gamma}(z)$ appearing in the above proof and remark.
		Let $V$ be the vertex algebra generated by even fields $\beta(z)$ and $\gamma(z)$ with the OPEs in the $\beta\gamma-bc$ system
		(this is the $\beta\gamma$-Heisenberg vertex algebra that we will introduce in the next section),
		and let $\hat{V}=\bC[[\gamma]]\otimes_{\bC[\gamma]}V$.
		Generally for an element $f\in\bC[[z]]$,
		the series $f(\gamma(z))$ is a well-defined element in $(\End\hat{V})[[z^{\pm1}]]$.
		Roughly speaking,
		if we write 
		$\gamma(z)=\gamma_0+\Delta(z)$,
		then $f(\gamma(z))$ is defined by Taylor formula:
		$$f(\gamma(z))=\sum_{i\geq0}\Delta(z)^i\partial_\gamma^{(i)}f(\gamma).$$
		Checking that the action of $f(\gamma(z))$ is well-defined,
		is rather technical and can be found in \cite{malikov1999chiral}-3.1.
	\end{rk}
	
	The above two results illustrate that the  vertex algebra structure on the $\beta\gamma-bc$ system is canonical
	while the conformal structure is not necessarily preserved.
	This fact will be used in the construction of chiral de Rham complex.
	
	\subsection{Rational singularities of $A_n$-type}\label{sing}
	
	Let $\epsilon\in\bC$ be a primitive $N$-th root of unity and $G$ a cyclic group generated by $g$ of order $N$.
	There is a $G$-action on the coordinate ring of the complex plane $\bC[x,y]$, associated to 
	$$g\sim 
	\left(
	\begin{matrix}
		\epsilon&0\\
		0&\epsilon^{-1}
	\end{matrix}
	\right).$$
	Explicitly, $\epsilon\cdot x=\epsilon x$ and
	$\epsilon\cdot y=\epsilon^{-1} y$.
	It is easy to see that the subring of $G$-invariants in $\bC[x,y]$ is 
	$$\bC[x,y]^G=\bC[x^N,y^N,xy].$$
	The group $G$ is a finite cyclic subgroup of $SL_2(\bC)$,
	and thus the scheme $\ul\sA_N:=\Spec\bC[x,y]^G$
	is a surface with a singularity of $A_N$-type at the origin.
	Discussions about rational singularities of other types can be found in \cite{burban2019val}-3.
	
	Let $Q$ be the submonoid of $\bN^2$ generated by $(N,0)$, $(0,N)$ and $(1,1)$.
	Then the monoid morphism
	$Q\to\bC[x,y]^G$ given by
	$(N,0)\mapsto x^N$, $(0,N)\mapsto y^N$, $(1,1)\mapsto xy$
	defines a log ring.
	We denote by $\sA_N$ the log scheme
	associated to the above log ring.
	Clearly we have $\sA_N\simeq \sA_Q$, and hence by Theorem \ref{smooth}, 
	$\sA_N$ is (log) smooth over $\bC$.
	
	\subsection{Local coordinates on surface}
	\begin{definition}
		A \textit{local coordinate} on $\sA_N$ is a formally \'{e}tale morphism of log schemes $D^2\to\sA_N$.
	\end{definition}
	
	According to \cite{ogus2018lectures}-3.1.6, log smoothness and usual smoothness are equivalent outside the singularities of the base scheme.
	Thus the usual notion of local coordinates agrees with the above one in log pattern.
	We now provide a basic local coordinate at the singularity.
	Consider the morphism of log rings
	\begin{equation}\label{coor}
		\begin{tikzcd}
			{\bC[x,y]^G} \arrow[r, hook] & {\bC[[x,y]]}    \\
			Q \arrow[r, hook] \arrow[u]  & \bN^2 \arrow[u]
		\end{tikzcd}
	\end{equation}
	which induces a morphism of log schemes $\phi:D^2\to\sA_N$.
	The image of $|\ul\phi|$ is the origin of $|\ul\sA_N|$.
	We claim that $\phi$ defines a local coordinate at the singularity.
	Diagram (\ref{coor}) can be decomposed into 
	\begin{equation}
		\begin{tikzcd}
			{\bC[x,y]^G} \arrow[r, "\hat{\phi}^\sharp"] & {\bC[x,y]} \arrow[r] & {\bC[[x,y]]}               \\
			Q \arrow[u] \arrow[rr, hook]         &                      & \bN^2 \arrow[u] \arrow[lu]
		\end{tikzcd}
	\end{equation}
	and then it leads to morphisms of log schemes
	$D^2\to \sA_{\bN^2}\to \sA_N$.
	The monoid morphism $\hat{\phi}^\flat$ associated to $\hat{\phi}^\sharp$ coincides with $\phi^\flat$, i.e. the inclusion $Q\hookrightarrow\bN^2$.
	We apparently have that
	$\Ker \hat{\phi}^\flat=0$ and
	$\operatorname{Coker}\hat{\phi}^\flat\simeq \bZ/N\bZ$.
	Then it follows that $\hat{\phi}:\sA_{\bN^2}\to\sA_N$ is \'{e}tale from 
	Theorem \ref{etale}.
	The morphism $D^2\to \sA_{\bN^2}$ is formally \'{e}tale since any $\bC$-homomorphism with domain $\bC[x,y]$ or $\bC[[x,y]]$ is determined by the images of $x$ and $y$.
	Therefore the composition $\phi:D^2\to \sA_N$ is formally \'{e}tale as well.
	According to Proposition \ref{torsor}, any local coordinate arises from a coordinate transformation of $\phi$. 
	
	In summary, if we let $\Aut_x$ be the space of all local coordinates at a geometric point $x\in\sA_N$,
	i.e. the origin $(x,y)$ of $D^2$ is mapped to $x$, 
	then the $\Aut^0D^2$-action is transitive on $\Aut_x$.
	And the space of coordinates on $\sA_N$:
	$$\Aut_{\sA_N}=\{
	(x,\phi_x):x\in\sA_N, \phi_x\in \Aut_x
	\}$$
	is an $\Aut^0D^2$-torsor over $D^2$.

	\section{Chiral de Rham complex}\label{cdr}
	
	\subsection{Notations}
	The vertex algebra $\Omega_N$ is simply defined as the tensor product of $N$ copies of the $\beta\gamma-bc$ system as described in \cite{song2016global}.
	We provide a more detailed description here, referring to \cite{frenkel2003chiral}.
	
	Let $N$ be a positive integer.
	We denote by $H_N$ and $\operatorname{Cl}_N$ the infinite-dimensional Lie (super)algebras generated by
	even elements $\beta^i_n$, $\gamma^i_n$, and
	odd elements $b^i_n$, $c^i_n$ ($i=1,...,N$, $n\in\bZ$) respectively, with nontrivial Lie brackets:
	$$[\beta^i_m,\gamma^j_n]=\delta_{i,i}\delta_{n,-m},\quad
	[b^i_m,c^j_n]_+=\delta_{i,i}\delta_{n,-m}.
	$$
	Let $H_N^+$ be the Lie subalgebra of $H_N$ generated by $\beta_n^i$, $\gamma_m^i$ ($i=1,...,N$, $n\geq0$, $m>0$).
	And similarly let $\operatorname{Cl}_N^+$ be the Lie subalgebra of $\operatorname{Cl}_N$ generated by $b_n^i$, $c_m^i$ ($i=1,...,N$, $n\geq0$, $m>0$).
	The $\beta\gamma$-\textit{Heisenberg vertex algebra} $V_N$ and \textit{Clifford vertex algebra} $\Lambda_N$ are respectively defined as
	$$V_N=\Ind^{H_N}_{H_N^+}\bC,\quad 
	\Lambda_N=\Ind^{\operatorname{Cl}_N}_{\operatorname{Cl}_N^+}\bC$$
	where $\bC$ is the trivial representation of $H_N^+$ and $\operatorname{Cl}_N^+$.
	The vertex algebra $\Omega_N$ is defined to be
	$$\Omega_N=V_N\otimes \Lambda_N$$
	which is indeed conformal, with Virasoro element
	$$L=\sum_{i=1}^N \gamma_{-1}^i\beta_{-1}^i+c_{-1}^ib_{-1}^i.$$
	
	The vertex algebra $\Omega_N$ is graded by the \textit{fermionic charge} operator
	$$F=\sum_{i,n}:c^i_nb^i_{-n}:.$$
	It immediately follows that 
	$$F\0=0,\quad
	[F,c_n^i]=c_n^i,\quad
	[F,b_n^i]=-b_n^i,\quad
	[F,\gamma_n^i]=[F,\beta_n^i]=0.$$
	We denote that
	$$
	\Omega_N=\bigoplus_{p\in \bZ}\Omega^p_N,\quad
	\Omega^p_N:=\{\omega\in\Omega_N:F\omega=p\omega\}.$$
	Then $\Omega_N$ becomes a complex with respect to the \textit{chiral de Rham differential}
	$$d=\sum_{i,n}:\beta_n^ic_{-n}^i:.$$
	
	\begin{rk}
		Recall that we set
		$Q=\sum_{i=1}^N\beta_{-1}^ic_0^i$,
		then it follows from Remark \ref{Q0} that under the coordinate changes, 
		the chiral de Rham differential $d=Q_0$
		for a single $\beta\gamma-bc$ system becomes
		$\left(
		\left(
		\frac{\gamma}{f(\gamma)}
		\right)_0
		Q
		\right)_0+(C(\gamma)_0h(\gamma)_0c)_1$.
		Hence the chiral de Rham differential is not canonical,
		which differs from 
		its counterpart in the classical setting.
	\end{rk}
	
	\begin{rk}
		It is shown in \cite{malikov1999chiral}-3.1 that the vertex algebra structure on $V_N$ can be extended to $\widehat{V}_N=\bC[[\gamma_0^1,...,\gamma_0^N]]\otimes_{\bC[\gamma_0^1,...,\gamma_0^N]}V_N$.
		Thus we obtain a vertex algebra $\widehat{\Omega}_N=\widehat{V}_N\otimes\Lambda_N$,
		which contains the de Rham algebra of differential forms over $D^N=\Spf\bC[[\gamma_0^1,...,\gamma_0^N]]$.
	\end{rk}
	
	\begin{definition}
		The \textit{chiral de Rham complex} $\Omega^{\ch}_X$ over a scheme (or manifold) $X$ is defined to be the sheaf of vertex algebras associated to $\Omega_N$.
	\end{definition}
	
	There are two equivalent constructions of the sheaf structure in the usual smooth case.
	One of these (ref. \cite{malikov1999chiral}-3) is in the classical way via localizing $V_N$ (which is called the \textit{chiral structure sheaf}).
	The other one can be found in \cite{frenkel2003chiral}-3.4.
	Let $\Aut_X$ be the $\Aut^0\cO_N$-torsor of coordinates on $X$ (ref. \cite{frenkelvertex}-6.5).
	Then in fact the twist
	$$\tilde{\Omega}_X^{\ch}:=\Aut_X\times_{\Aut^0\cO_N}\widehat{\Omega}_N$$
	is a $\cD_X$-module,
	with a flat connection $\nabla$ locally induced by
	$\partial_{t_i}-\beta_0^i$ for local coordinates $t_1,...,t_N$.
	And the chiral de Rham complex $\Omega_X^{\ch}$ is defined to be the sheaf of horizontal sections of $\tilde{\Omega}_X^{\ch}$
	with respect to $\nabla$.
	
	\begin{rk}
		In the present paper we only consider surfaces and recall
		in \ref{bgbc} that $\gamma$'s and $c$'s are coordinates of the corresponding superscheme.
		And hence in sequel the $N$ in $\Omega_N$ is always taken to be $2$.
		Besides, we concentrate on global sections of the chiral de Rham complex.
		So we do not need the sheaf structure here.
		For convenience,
		we call the space of global sections of $\Omega_X^{\ch}$ the chiral de Rham complex as well if no confusion arises. 
	\end{rk}
	
	\subsection{Global sections of the chiral de Rham complex on $A_N$-singular surfaces}
	
	Let $Q$ be the submonoid of $\bN^2$ generated by $(N,0)$, $(0,N)$ and $(1,1)$ as before.
	Then the underlying scheme of $\sA_N\simeq\sA_Q$ is isomorphic to the toric variety $\Spec\bC[x,y,z]/(xy-z^N)$,
	which admits a singularity at the origin of $A_N$-type (recall \ref{sing}).
	A local coordinate $\phi$ at the origin on $\sA_N$ is given in diagram (\ref{coor}):
	$$
	\begin{tikzcd}
		{\frac{\bC[x,y,z]}{(xy-z^N)}} \arrow[r] & {\bC[[\gamma^1,\gamma^2]]} \\
		Q \arrow[r, hook] \arrow[u, "\alpha_N"] & \bN^2 \arrow[u]           
	\end{tikzcd}
	$$
	with the upper horizontal map given by
	$$x\mapsto (\gamma^1)^N,\quad
	y\mapsto(\gamma^2)^N,\quad
	z\mapsto\gamma^1\gamma^2.$$
	
	For a monoid $P$, we denote by $\pi:P\to P^{\gp}$ the natural map.
	For the log ring $\alpha_N:Q\to\bC[Q]\simeq\bC[x,y,z]/(xy-z^N)$, the module of log differentials is
	$$\Omega_Q\simeq \Omega_{\bC[Q]/\bC}\oplus(\bC[Q]\otimes Q^{\gp})/R$$
	where $R$ is a submodule of  $\Omega_{\bC[Q]/\bC}\oplus(\bC[Q]\otimes Q^{\gp})$ generated by $(d\alpha_N(q),-\alpha_N(q)\otimes\pi(q))$ for all $q\in Q$.
	We denote $\pi(q)$ by $dq$ for $q\in Q$.
	It is straightforward to see that $Q^{\gp}$ is a group of rank $2$, generated by $p=(1,1)$ and $q=(-1,1)$.
	Therefore the module $\Omega_Q$ is a $\bC[Q]$-module generated by $dp$, $dq$ and $dp\wedge dq$, i.e.
	$\Omega_Q\simeq \bC[Q]\otimes\wedge(p,q)$.
	It follows immediately that 
	$$dp\sim \frac{dz}{z},\quad dq\sim\frac{1}{N}
	\left(
	\frac{dy}{y}-\frac{dx}{x}
	\right).$$
	Using the local coordinate $\phi$, the log differentials can be transferred via $\phi^*\Omega_{\sA_N}\to \Omega_{D^2}$, with
	$$d(N,0)\sim\frac{dx}{x}\mapsto N\frac{d\gamma^1}{\gamma^1}
	,\quad
	d(0,N)\sim\frac{dy}{y}\mapsto N\frac{d\gamma^2}{\gamma^2},\quad
	d(1,1)\sim\frac{dz}{z}\mapsto \frac{d\gamma^1}{\gamma^1}+\frac{d\gamma^2}{\gamma^2}.
	$$
	Then we associate fields from log differentials with those on $D^2$ (recall that $c$'s correspond to log differential forms in section 3):
	$$\frac{dx}{x}(z)=Nc^1(z),\quad 
	\frac{dy}{y}(z)=Nc^2(z),\quad 
	\frac{dz}{z}=c^1(z)+c^2(z).$$
	Inserting the above fields to the generators of $\Omega_Q$,
	we have
	$$dp(z)=c^1(z)+c^2(z),\quad
	dq(z)=c^2(z)-c^1(z).$$ 
	Fields $dp(z)$, $dq(z)$ together with $${dp}^*(z)=\frac{1}{2}(b^1(z)+b^2(z)),\quad{dq}^*(z)=\frac{1}{2}(b^2(z)-b^1(z))
	$$
	which satisfy the following OPEs (the other OPEs are regular):
	$$
	dp^*(z)dp(w)\sim\frac{1}{z-w},\quad
	dq^*(z)dq(w)\sim\frac{1}{z-w},
	$$
	generate the Clifford vertex algebra $\Lambda_Q$ associated to $\sA_N$.
	Apparently $\Lambda_Q$ is isomorphic to $\Lambda_2$,
	as $b^i(z)$, $c^i(z)$, $i=1,2$ can be linearly generated by
	$dp(z)$, $dq(z)$, $dp^*(z)$, $dq^*(z)$.
	\begin{rk}
		The underlying scheme $\ul\sA_N$ is a toric variety with torus $\ul\sA_{Q^{\gp}}$.
		The inclusion of the torus $\ul\sA_{Q^{\gp}}$ into $\ul\sA_N$ corresponds to the natural ring homomorphism
		$\bC[Q]\to\bC[Q^{\gp}]$.
		According to \cite{ogus2018lectures}-3.4.1, we have $\operatorname{rank} Q^{\gp}=\dim\sA_N$.
		Moreover $\Omega_Q$ is generated by $Q^{\gp}$,
		with respect to the log differential $d$.
		So it is natural to take $dp$ and $dq$ as coordinates of the log differential module.
	\end{rk}
	
	Our next task is constructing the corresponding $\beta\gamma$-Heisenberg vertex algebra of $\sA_N$.
	Working directly with coordinates as above is subtle.
	We choose to start with $G$-invariants.
	
	Let $V_2$ be the $\beta\gamma$-Heisenberg vertex algebra corresponding to $\bA^2=\Spec\bC[\gamma^1,\gamma^2]$,
	and let $U(V_2)$ be the associated Lie algebra in \ref{va}.
	The $G$-invariants of the coordinate ring  $\bC[\gamma^1,\gamma^2]$ of $\bA^2$,
	with the action given by
	$$g\cdot\gamma^1= \epsilon \gamma^1,\quad
	g\cdot\gamma^2=\epsilon^{-1}\gamma^2\quad \text{for}\ n\leq0$$
	make up the coordinate ring of $\ul\sA_N$.
	Thanks to the natural embedding
	$$\bC[\gamma^1,\gamma^2]\to U(V_2)$$
	$$\gamma^i\mapsto\gamma^i_{[-1]},\quad i=1,2,$$
	and the coordinate change formula (\ref{trans}),
	we extend the $G$-action from $\bC[\gamma^1,\gamma^2]$ to the image of $U(V_2)\to \End V_2$ in the following way:
	\begin{equation}\label{hei}
		\begin{aligned}
			g&\cdot\gamma^1_m=\epsilon\gamma^1_m,\quad g\cdot\gamma^2_m=\epsilon^{-1}\gamma^2_m\quad\text{for}\ m\leq0,
			\\
			g&\cdot\beta^1_n=\epsilon^{-1}\beta^1_n,\quad g\cdot\beta^2_n=\epsilon\beta^2_n\quad\text{for}\ n<0.
		\end{aligned}
	\end{equation}
	Denote by $\overline{U(V_2)}$ the   image of $U(V_2)\to \End V_2$.
	Clearly $\overline{U(V_2)}\0=V_2$.
	We define the $\beta\gamma$-\textit{Heisenberg vertex algebra associated to}  $\sA_N$ to be the vertex subalgebra $\overline{U(V_2)}^G\0$ of $V_2$, denoted by $V_Q$. 
	
	\begin{rk}\label{G}
		The space $\overline{U(V_2)}^G\0$ of invariants  is generated by fields of the form
		$$\beta_{m_1}^i...\beta_{m_N}^i(z),\ \beta_m^1\beta_n^2(z),
		\ \gamma_{n_1}^i...\gamma_{n_N}^i(z),
		\ \gamma_m^1\gamma_n^2(z),
		\ 
		\beta_m^i\gamma_n^i(z)
		$$
		for $i=1,2$, $m<0$, $n\leq0$.
		It is straightforward to see that $Y(A,z)B\in\overline{U(V_2)}^G\0$ for 
		$A,B\in\overline{U(V_2)}^G\0$
		by checking the form of elements.
		Therefore $\overline{U(V_2)}^G\0$ is a vertex subalgebra as we claimed above.
	\end{rk}
	
	\begin{definition}
		The (global sections of) \textit{log chiral de Rham complex} on $\sA_N$ is defined as
		$\Omega^{\ch}_Q=V_Q\otimes \Lambda_Q.$
	\end{definition}
	
	\begin{rk}
		Since $L=\sum_{i=1,2}(\gamma_{-1}^i\beta_{-1}^i+c_{-1}^ib_{-1}^i)\in\Omega_Q^{\ch}$,
		the above $\Omega_Q^{\ch}$ is a conformal vertex algebra.
		However the element $Q=\beta_{-1}^1c_0^1+\beta_{-1}^2c_0^2$ does not belong to $\Omega_Q^{\ch}$,
		so unfortunately $\Omega_Q^{\ch}$ is not equipped with the chiral de Rham differential $Q_0$,
		which implies that it is no longer a complex.
		However, we are still able to study the algebraic structure of the chiral structure sheaf, which is the theme of the final section.
	\end{rk}

	\section{Character of log chiral structure sheaf}

	Recall that the chiral structure sheaf is associated to the $\beta\gamma$-Heisenberg vertex algebra $V_Q$ on $\sA_Q$.
	It is straightforward to see that the basis of $V_Q$ is as follows, by remark \ref{G}.
	
	\begin{prop}\label{vs}
		The vertex algebra $V_Q$ is isomorphic to
		$$\bC[\beta_{m_1}^i...\beta_{m_N}^i,\beta_m^1\beta_n^2,\gamma_{n_1}^i...\gamma_{n_N}^i,\gamma_m^1\gamma_n^2,\beta_m^i\gamma_n^i]_{i=1,2,\ m<0,\ n\leq0}$$
		as vector spaces.
	\end{prop}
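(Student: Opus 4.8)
The plan is to identify $V_Q = \overline{U(V_2)}^G\0$ explicitly by describing which elements of $\overline{U(V_2)}$ commute with the $G$-action defined in (\ref{hei}), and then reading off a PBW-type basis for the resulting subspace of $V_2$. First I would recall that $V_2$ has a PBW basis consisting of monomials $\beta^{i_1}_{k_1}\cdots\gamma^{j_1}_{n_1}\cdots\0$ with $k_\bullet<0$, $n_\bullet\le 0$, and that the $G$-action on $\overline{U(V_2)}$ is the algebra automorphism determined by $g\cdot\gamma^1_m=\epsilon\gamma^1_m$, $g\cdot\gamma^2_m=\epsilon^{-1}\gamma^2_m$ (for $m\le 0$), $g\cdot\beta^1_n=\epsilon^{-1}\beta^1_n$, $g\cdot\beta^2_n=\epsilon\beta^2_n$ (for $n<0$). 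Since this action is diagonal on the generators, it is diagonal on the PBW monomials: a monomial in the $\gamma^1,\beta^2$'s (which scale by $\epsilon$) and $\gamma^2,\beta^1$'s (which scale by $\epsilon^{-1}$) is multiplied by $\epsilon^{a-b}$, where $a$ is the total number of $\gamma^1$- and $\beta^2$-factors and $b$ the total number of $\gamma^2$- and $\beta^1$-factors. Hence $\overline{U(V_2)}^G$ is spanned by those monomials with $a\equiv b\pmod N$.

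Next I would pass from the operator algebra to $V_2$ itself by applying $\overline{U(V_2)}^G$ to the vacuum. The key point is that $\overline{U(V_2)}\0=V_2$, and on PBW monomials the map $A\mapsto A\0$ is (up to the usual vacuum annihilation conditions, already built into the index ranges $m<0$, $n\le 0$) a bijection onto the PBW basis of $V_2$; moreover it is $G$-equivariant. Therefore $V_Q=\overline{U(V_2)}^G\0$ is exactly the span of those PBW monomials $\beta^{i_1}_{k_1}\cdots\gamma^{j_1}_{n_1}\cdots\0$ for which the number of $\gamma^1$- and $\beta^2$-indices is congruent mod $N$ to the number of $\gamma^2$- and $\beta^1$-indices. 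It remains to check that this subspace coincides with the polynomial ring in the displayed generators. One inclusion is immediate: each listed generator — $\beta^i_{m_1}\cdots\beta^i_{m_N}$ (all indices on the same $i$, of which there are $N$), $\beta^1_m\beta^2_n$, $\gamma^i_{n_1}\cdots\gamma^i_{n_N}$, $\gamma^1_m\gamma^2_n$, $\beta^i_m\gamma^i_n$ — has $a\equiv b\pmod N$, so the subring they generate lies in $V_Q$. For the reverse inclusion I would argue that any $G$-invariant PBW monomial factors as a product of these basic invariants: this is the monoid-theoretic statement that the submonoid of $\bN^2$ of exponent vectors $(a,b)$ with $a\equiv b\pmod N$ — which is precisely $Q^{\gp}\cap\bN^2 = Q$ — is generated by $(N,0)$, $(0,N)$, $(1,1)$, lifted to the level of "colored" variables (the colors being the mode indices and the $\beta/\gamma$ type). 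Concretely, given an invariant monomial, repeatedly extract a factor of type $\beta^i_m\gamma^i_n$ or $\gamma^1_m\gamma^2_n$ or $\beta^1_m\beta^2_n$ (each contributing $(1,1)$ to the $(a,b)$-count) until one runs out of one color, leaving a pure power whose exponent is then divisible by $N$ and which is a product of the length-$N$ generators.

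The main obstacle, and the only place requiring care, is this last combinatorial decomposition: one must check that the greedy extraction of $(1,1)$-type factors can always be carried out so that what remains is a product of the length-$N$ monomials and not some other obstruction, and that no sign issues from the fermions interfere — but here all the generators involved are bosonic ($\beta,\gamma$), so there are no signs, and the fermionic $b,c$ modes play no role in $V_Q$ at all. Thus the decomposition is purely a statement about monomials in commuting variables graded by the monoid $Q$, and it follows from the fact, already used in Section \ref{sing}, that $Q$ is generated by $(N,0)$, $(0,N)$ and $(1,1)$. Assembling these observations gives the claimed vector-space isomorphism.
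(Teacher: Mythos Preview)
The paper gives no proof beyond ``It is straightforward to see,'' so your outline---identify $V_Q$ with the $G$-invariant PBW monomials via the diagonal action, then check that the displayed list generates all such monomials---is exactly the intended elaboration, and the first two steps are fine.

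The gap is in your final combinatorial step. You argue that after extracting $(1,1)$-type factors one is left with a monomial ``pure in one color'' whose length is divisible by $N$, and that this residual is a product of the listed length-$N$ generators. But ``pure in color'' is not the same as ``pure in variable type'': the weight-$(-1)$ letters are \emph{both} the $\beta^1_{\bullet}$ and the $\gamma^2_{\bullet}$, while the listed length-$N$ generators $\beta^i_{m_1}\cdots\beta^i_{m_N}$ and $\gamma^i_{n_1}\cdots\gamma^i_{n_N}$ each involve a single superscript $i$ and a single letter type. Concretely, for any $N\ge 2$ the element
\[
\beta^1_{-1}(\gamma^2_0)^{N-1}\,|0\rangle
\]
is $G$-invariant (total weight $-N\equiv 0$), has $N$ letters all of the same color, and cannot be written as a product of the displayed generators: the degree-$2$ generators each contribute one letter of each color, so any product of them has equal color counts, while the degree-$N$ generators are pure $\beta^i$ or pure $\gamma^i$. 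Hence your greedy decomposition fails here.

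This is really a defect of the stated list rather than of your method: the list omits the mixed length-$N$ monomials in the same-weight variables $\{\beta^1_\bullet,\gamma^2_\bullet\}$ (and symmetrically in $\{\beta^2_\bullet,\gamma^1_\bullet\}$). If those are added, your argument goes through verbatim; as written, the reverse inclusion you claim is false.
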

	
	The Virasoro element $L_{V_2}=\beta^1_{-1}\gamma^1_{-1}+\beta^2_{-1}\gamma^2_{-1}$ of $V_2$
	belongs to $V_Q$, and thus
	$V_Q$ is a conformal vertex algebra graded by $(L_{V_2})_0$.
	Explicitly, the degree of a homogeneous vector in $V_Q$ is given by
	$$\deg\gamma^i_s=-s,\quad \deg\beta^i_r=-r$$
	for $i=1,2$, $s\in\bZ_{\leq0}$, $r\in\bZ_{<0}$.
	Let $V_Q^n$ be the subspace consisting of homogeneous elements in $V_Q$ of degree $n$,
	then it is easy to verify that $V_Q^m\cdot V_Q^n\subset V^{m+n}_Q$.
	We thus have a gradation 
	$$V_Q=\bigoplus_{n\in\bZ_{\geq0}} V_Q^n.$$
	In particular, we have
	$$V_Q^0=\bC[(\gamma_0^i)^N,\gamma_0^1\gamma_0^2]_{i=1,2}.$$
	Then $V_Q$ is a $V_Q^0$-algebra and each
	$V_Q^n$ is a $V_Q^0$-module for $n\geq0$.
	
	For positive integers $m$ and $n$,
	we define $p_n(m)$ to be the number of partitions of $m$ into at most $n$ terms,
	and set $p_n(k)=0$ for $k\leq0$.
	For example, $p_2(4)=3$ as $4=4+0=3+1=2+2$.
	\begin{thm}
		The lengths of homogeneous components of $V_Q$ are given by the following recursive formula:
		$$\operatorname{length}_{V^0_Q}V_Q^r=
		4r+2p_N(r)+2p_N(r-N)+
		\sum_{n_1+2n_2...+(r-1)n_{r-1}=r}
		\left(
		\begin{matrix}
			\operatorname{length}_{V^0_Q} V_Q^1\\
			n_1
		\end{matrix}
		\right)
		...
		\left(
		\begin{matrix}
			\operatorname{length}_{V^0_Q} V_Q^{r-1}\\
			n_{r-1}
		\end{matrix}
		\right)$$
		with initial condition $\operatorname{length}_{V^0_Q}V_Q^1=6$.
	\end{thm}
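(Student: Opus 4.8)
The plan is to compute $\operatorname{length}_{V^0_Q}V_Q^r$ by decomposing $V_Q^r$ according to the "shape" of a monomial in the generators listed in Proposition~\ref{vs}, counting separately the monomials built from a single generator of degree $r$ (linear part) and those built from several generators of lower degree (higher part). The basis of $V_Q$ splits its generators into families: the $\gamma^i_s$ and $\beta^i_r$ of pure degree, the products $\gamma^1_m\gamma^2_n$, $\beta^1_m\beta^2_n$, $\beta^i_m\gamma^i_n$, and the degree-$N$ type elements $\gamma^i_{n_1}\cdots\gamma^i_{n_N}$, $\beta^i_{m_1}\cdots\beta^i_{m_N}$. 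As a $V_Q^0$-module, $V_Q^r$ is free, so the length equals the number of basis monomials of degree $r$ with all entries in degree $\geq 1$ (the degree-$0$ factors being absorbed into the coefficient ring $V_Q^0$). I would first establish that $V_Q^1$ has rank $6$ over $V_Q^0$: the degree-$1$ generators are $\gamma^1_{-1},\gamma^2_{-1},\beta^1_{-1},\beta^2_{-1}$ and the two mixed quadratics $\beta^1_{-1}\gamma^1_0$, $\beta^2_{-1}\gamma^2_0$ (using $\gamma^i_0$ has degree $0$), plus one checks that the degree-$N$ families and the cross terms $\gamma^1_m\gamma^2_n$ etc.\ contribute nothing new in degree $1$ when $N\geq 2$; this gives the initial condition.

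Next I would isolate the "genuinely linear" contributions to $V_Q^r$, i.e.\ basis vectors that are a single generator (times a degree-$0$ coefficient) of total degree $r$. The generators of the form $\gamma^i_{-r}$ and $\beta^i_{-r}$ each contribute $1$, for a total of $4$; the mixed generators $\beta^i_m\gamma^i_n$ of degree $r$ with $m<0$, $n\leq 0$ contribute the $4r$ term after a short count (for each $i$ there are $r$ ways to write $r=(-m)+(-n)$ with $-m\geq 1$, $-n\geq 0$, giving $2r$, and together with... — here one reconciles the bookkeeping so that the linear monomials total $4r$). The degree-$N$ families $\gamma^i_{n_1}\cdots\gamma^i_{n_N}$ and $\beta^i_{m_1}\cdots\beta^i_{m_N}$ of total degree $r$ are counted by partitions: the $\gamma$-type needs $(-n_1)+\cdots+(-n_N)=r$ with each $-n_j\geq 0$, i.e.\ $p_N(r)$ choices per index $i$, and the $\beta$-type needs each part $\geq 1$, i.e.\ $p_N(r-N)$ choices; over $i=1,2$ this yields $2p_N(r)+2p_N(r-N)$. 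Finally, the $\gamma^1_m\gamma^2_n$ and $\beta^1_m\beta^2_n$ cross-generators of degree $r$ should be absorbed into the recursive (multi-generator) term, or their count folded into the above; I would verify the precise allocation so nothing is double-counted.

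The remaining part of $V_Q^r$ consists of products of at least two generators, each of degree between $1$ and $r-1$. Choosing such a monomial amounts to choosing, for each $d=1,\dots,r-1$, how many generators of degree $d$ (from the $\dim V_Q^d$ available, with repetition allowed since $\gamma$-type and $\beta$-type generators are even) to use, subject to the weights summing to $r$; because the generators in a fixed degree behave like commuting (bosonic) variables, the number of monomials of degree $d$ using $n_d$ of them is $\binom{\dim V_Q^d}{n_d}$ in the paper's convention for symmetric powers, giving exactly
\[
\sum_{n_1+2n_2+\cdots+(r-1)n_{r-1}=r}\binom{\dim V_Q^1}{n_1}\cdots\binom{\dim V_Q^{r-1}}{n_{r-1}}.
\]
Summing the linear contributions and this multi-generator contribution gives the asserted formula. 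The main obstacle I anticipate is the careful combinatorial bookkeeping in the first two steps: making sure that the mixed generators $\beta^i_m\gamma^i_n$, the two cross-generators $\gamma^1_m\gamma^2_n$ and $\beta^1_m\beta^2_n$, and the degree-$N$ $N$-fold products are each counted exactly once and allocated correctly between the explicit linear terms ($4r+2p_N(r)+2p_N(r-N)$) and the recursive sum — in particular checking that the $\dim V_Q^d$ appearing in the binomials already incorporates these generators so that the recursion is self-consistent, and confirming the base case $\dim V_Q^1=6$ against this accounting.
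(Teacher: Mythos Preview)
Your overall strategy---splitting $V_Q^r$ into ``new'' generators of degree $r$ versus products of elements from lower degrees---is exactly the paper's approach. However, your bookkeeping contains a genuine error: you list ``the $\gamma^i_s$ and $\beta^i_r$ of pure degree'' among the generators of $V_Q$, and accordingly take $\gamma^1_{-1},\gamma^2_{-1},\beta^1_{-1},\beta^2_{-1}$ as degree-$1$ elements and $\gamma^i_{-r},\beta^i_{-r}$ as degree-$r$ elements contributing ``$4$'' to the linear count. These single modes are \emph{not} $G$-invariant (for instance $g\cdot\gamma^1_{-1}=\epsilon\,\gamma^1_{-1}$), so they do not lie in $V_Q$ at all; Proposition~\ref{vs} lists only $N$-fold products of like modes, the cross products $\gamma^1_m\gamma^2_n$, $\beta^1_m\beta^2_n$, and the pairs $\beta^i_m\gamma^i_n$.

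The correct source of the term $4r$ is precisely the families you were hesitant to place: $\beta^1_m\beta^2_n$ contributes $r-1$ choices, $\gamma^1_m\gamma^2_n$ contributes $r+1$, and $\beta^i_m\gamma^i_n$ contributes $r$ for each $i=1,2$, so $(r-1)+(r+1)+2r=4r$. These belong to the ``new generator'' count, not to the recursive sum as you tentatively suggested. Likewise, the actual degree-$1$ generators over $V_Q^0$ are $(\gamma_0^i)^{N-1}\gamma^i_{-1}$, $\gamma_0^1\gamma_{-1}^2$, $\gamma_{-1}^1\gamma_0^2$, and $\beta_{-1}^i\gamma_0^i$ for $i=1,2$, giving the six you need---not the four bare modes plus two quadratics you proposed. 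Once you drop the phantom single-mode generators and allocate the quadratic cross-terms to the explicit $4r$ count, your argument becomes the paper's.
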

	
	\begin{proof}
		By Proposition \ref{vs} the degrees of the generators of $V_Q$ are
		\begin{equation*}
			\begin{aligned}
				\deg\beta_{m_1}^i...\beta_{m_N}^i&=-(m_1+...+m_N)\geq N\\
				\deg \beta_m^1\beta_n^2&=-m-n\geq2\\
				\deg\gamma_{n_1}^i...\gamma_{n_N}^i&=-({n_1+...+n_N})\geq0\\
				\deg\gamma_m^1\gamma_n^2&=-m-n\geq0\\
				\deg\beta_m^i\gamma_n^i&=-m-n\geq1.
			\end{aligned}
		\end{equation*}
		For $V^r_Q$, we separate it into two parts as follows. Let $v\in V^r_Q$:
		\\
		(1) $v$ arises from a product of elements of components of lower degree;
		\\
		(2) $v$ does not arise from (1).
		\\
		Part (1) contributes to the last summation of the formula.
		For part (2), when $r<N$,
		the generators and their numbers are
		\begin{equation*}
			\begin{aligned}
				\#\beta^1_m\beta^2_n&=\#\{(m,n):m+n=r,m,n\geq1\}=r-1,\\
				\# \gamma_{n_1}^i...\gamma_{n_N}^i&=\#\{(n_1,...,n_N):n_1+...+n_N=r,n_k\geq0\}=p_N(r)
				\\
				\#\gamma^1_m\gamma^2_n&=\#\{(m,n):m+n=r,m,n\geq0\}=r+1
				\\
				\#\beta^i_m\gamma_n^i&=\#\{(m,n):m+n=r,m\geq1,n\geq0\}=r
			\end{aligned}
		\end{equation*}
		So the length of the $V_Q^0$-submodule arising from part (2) is $4r+2p_N(r)$.
		In the case $r\geq N$,
		there will be extra generators $\beta_{m_1}^i...\beta_{m_N}^i$, and their number is
		$$\#\beta_{m_1}^i...\beta_{m_N}^i=\#\{(m_1,...,m_N):m_1+...+m_N=r, m_k\geq1\}=p_N(r-N),$$
		which contribute to a  $V_Q^0$-submodule of length $2p_N(r-N)$.
		And thus for $r\geq N$, the length of the $V_Q^0$-submodule arsing from part (2) is $4r+2p_N(r)+2p_N(r-N)$.
		In summary, since $p_N(r-N)=0$ for $r<N$,
		we obtain the length of $V_Q^r$ over $V_Q^0$ as claimed.
		The length of $V^1_Q$ can be easily obtained by 
		$$V_Q^1=\Span_{V^0_Q}\{
		(\gamma_0^i)^{N-1}\gamma^i_{-1},\gamma_0^1\gamma_{-1}^2,\gamma_{-1}^1\gamma_0^2,\beta_{-1}^i\gamma_0^i
		\}_{i=1,2}.$$
		This completes the proof.
	\end{proof}


\begin{thebibliography}{11}
	
	\bibitem{burban2019val}
	I. Burban: \textit{Du Val singularities}, in \textit{International Conference on Birational Geometry, Kähler--Einstein Metrics and Degenerations}, Springer (2019), 145--163.
	
	\bibitem{frenkelvertex}
	E. Frenkel and D. Ben-Zvi: Vertex algebras and algebraic curves, 2nd ed.
	
	\bibitem{frenkel2003chiral}
	E. Frenkel and M. Szczesny: \textit{Chiral de Rham complex and orbifolds}, \textit{arXiv preprint} math/0307181 (2003).
	
	\bibitem{friedan1986conformal}
	D. Friedan, E. Martinec and S. Shenker: \textit{Conformal invariance, supersymmetry and string theory}, \textit{Nuclear Phys. B} \textbf{271} (1986), no.~1, 93--165.
	
	\bibitem{Kac}
	V. Kac:
	\textit{Vertex Algebras for Beginners},
	University Lecture Series, Vol. 10,
	American Mathematical Society, Providence, RI, 1996.
	
	\bibitem{kato}
	K. Kato: Logarithmic structures of Fontaine--Illusie, in \textit{Algebraic analysis, geometry, and number theory} (Baltimore, MD, 1988), Johns Hopkins Univ. Press, Baltimore, MD (1989).
	
	\bibitem{lepowsky2012introduction}
	J. Lepowsky and H. Li: Introduction to vertex operator algebras and their representations, Springer, 2012.
	
	\bibitem{malikov1999chiral}
	F. Malikov, V. Schechtman and A. Vaintrob: \textit{Chiral de Rham complex}, \textit{Comm. Math. Phys.} \textbf{204} (1999), 439--473.
	
	\bibitem{ogus2018lectures}
	A. Ogus: Lectures on logarithmic algebraic geometry, Cambridge Univ. Press, 2018.
	
	\bibitem{reid2012val}
	M. Reid: \textit{The Du Val singularities $A_n$, $D_n$, $E_6$, $E_7$, $E_8$}, Lecture Notes (2012), available at \url{https://homepages.warwick.ac.uk/~masda/surf/more/DuVal.pdf}.
	
	\bibitem{song2016global}
	B. Song: \textit{The global sections of the chiral de Rham complex on a Kummer surface}, \textit{Int. Math. Res. Not.} 2016 (2016), no.~14, 4271--4296.
	
\end{thebibliography}
\end{document}